\documentclass[sort&compress]{elsarticle}

\usepackage{amsmath,amsfonts,amsthm}
\usepackage{multirow}
\usepackage{pgfplots}
\pgfplotsset{compat=newest}
\usetikzlibrary{plotmarks}
\usetikzlibrary{arrows.meta}
\usepgfplotslibrary{patchplots}
\usepackage{grffile}
\usepackage[linesnumbered,ruled]{algorithm2e}
\newtheorem{theorem}{Theorem}
\newtheorem{remark}{Remark}
\newcommand{\rmi}{\mathrm{i}}
\newcommand{\rme}{\mathrm{e}}

\newcommand{\CC}{\mathbb{C}}
\newcommand{\bb}{\boldsymbol}
\usepackage{hyperref}

\begin{document}

\begin{frontmatter}
\title{Efficient third order tensor-oriented directional splitting for exponential integrators}
\author[1]{Fabio Cassini\corref{cor1}}
\ead{fabio.cassini@univr.it}
\cortext[cor1]{Corresponding author}

\affiliation[1]{organization={Department of Computer Science,
                               University of Verona},
            addressline={Strada le Grazie, 15}, 
            city={Verona},
            postcode={37134},
            country={Italy}}
\myfooter[L]{} 

\begin{abstract}
Suitable discretizations through tensor product formulas of popular 
multidimensional operators (diffusion or diffusion--advection, for instance)
lead to matrices with $d$-dimensional Kronecker sum structure.
For evolutionary Partial Differential Equations containing such operators and integrated in time with 
exponential integrators, it is then of paramount importance to efficiently
approximate the actions of $\varphi$-functions of the arising matrices.
In this work, we show how to produce directional split approximations of
third order with respect to the time step size.
They conveniently employ tensor-matrix products (the so-called $\mu$-mode
product and related Tucker operator, realized in practice
with high performance level 3 BLAS), and allow for the effective 
usage of exponential Runge--Kutta integrators up to order three. The technique can also
be efficiently implemented on modern computer hardware such as Graphic
Processing Units.
The approach has been successfully tested against state-of-the-art techniques
on two well-known physical models that lead to Turing patterns,
namely the 2D Schnakenberg and the 3D FitzHugh--Nagumo systems, on different
{\color{black}hardware and software} architectures.
\end{abstract}

\begin{keyword}
exponential integrators, $\mu$-mode product, directional splitting, 
$\varphi$-functions, Kronecker sum, Turing patterns, Graphic Processing Units
\MSC[2020] 65F60 \sep 65L04 \sep 65L05 \sep 65M20
\end{keyword}

\end{frontmatter}

  \section{Introduction}
  We are interested in the solution of stiff systems of Ordinary
  Differential Equations (ODEs) of type
  \begin{subequations}\label{eq:ODE}
  \begin{equation}\label{eq:ODEeq}
      \bb u'(t)=K\bb u(t)+\bb g(t,\bb u(t))=\bb f(t,\bb u(t)),\quad
      \bb u(0)=\bb u_0,
  \end{equation}
  using exponential integrators~\cite{HO10}.
  The stiff part is represented by the matrix
  $K\in\CC^{N\times N}$ which we assume to have $d$-dimensional
  \emph{Kronecker sum structure}
  \begin{equation}\label{eq:kronsum}
    K = A_d \oplus \cdots \oplus A_1 =
    A_{\otimes 1} + \cdots +A_{\otimes d},
  \end{equation}
  with
  \begin{equation}
    A_{\otimes\mu} = I_d \otimes \cdots \otimes I_{\mu+1} \otimes A_\mu
    \otimes I_{\mu-1} \otimes \cdots \otimes I_1, \quad \mu=1,\ldots,d.
  \end{equation}
  \end{subequations}
Here, $A_\mu\in \CC^{n_\mu\times n_\mu}$  and $I_\mu$ is
  the identity matrix of size $n_\mu$. Moreover,
  $\bb g(t,\bb u(t))$ is a nonlinear function
  of $t$   and of the unknown $\bb u(t) \in \CC^N$, with $N=n_1\cdots n_d$.
  Throughout the paper the
  symbol~$\otimes$ denotes the Kronecker product of matrices,
  while~$\oplus$ is employed for the Kronecker sum of matrices.
  Systems in form~\eqref{eq:ODE} naturally arise when discretizing in space some
  Partial Differential Equations (PDEs) defined on tensor product domains
  and with appropriate boundary conditions. In those cases,
  $A_\mu$ are matrices which correspond to differential or fractional
  one-dimensional operators after discretization with (nonuniform) finite
  differences~\cite{CCEOZ22,CC24}, tensor product finite or spectral
  elements~\cite{MMPC22,CMM23}, and
  usually encapsulate boundary conditions. Typical examples are (systems of)
  evolutionary PDEs which contain diffusion--advection--absorption or
  Schr\"odinger operators, among the others.

  In the last years, exponential integrators proved to be a valuable alternative
  for the efficient integration of systems with Kronecker sum 
  structure~\eqref{eq:ODE}~\cite{CCEOZ22,CC24,MMPC22,CMM23,CCZ23,CCZ23phi,CC23,AAKW20,DASS20,LZL21,LZZ22,NWZL08,JZZD15,ZJZ16,HJW19}.
  In particular, it is shown in References~\cite{CC23,CC24} that
  the well-known exponential Runge--Kutta method of order two \textsc{etd2rk}
  \begin{equation}\label{eq:ETD2RK}
    \begin{aligned}
      \bb u_{n2}&=\bb u_n+\tau\varphi_1(\tau K)\bb f(t_n,\bb u_n),\\
      \bb u_{n+1}&=\bb u_{n2}+
      \tau\varphi_2(\tau K)(\bb g(t_{n+1},\bb u_{n2})-\bb g(t_n,\bb u_n)),
    \end{aligned}
  \end{equation}
  where $\tau$ denotes the time step size, can be directionally split, obtaining
  the so-called \textsc{etd2rkds} integrator. There, it is also presented that
  {\color{black}the latter} strongly outperforms other well-established methods in the solution of stiff
  diffusion--advection--reaction models.
  The $\varphi$-functions appearing in exponential integrators are exponential-like matrix 
  functions and, for generic matrix $X\in\CC^{N\times N}$, have Taylor expansion
  \begin{equation*}
    \varphi_\ell(X)=\sum_{k=0}^\infty\frac{X^k}{(k+\ell)!}, \quad \ell\geq 0.
  \end{equation*}
  Concerning their computation and action to a vector, we refer to
  References~\cite{AMH10,SID19,CZ19,AIDAJ23,SW09,LYL22} for algorithms suitable for small sized matrices,
  to References~\cite{GRT18,LPR19,AMH11,NW12,CKOR16,CCZ20,CCZ23b} for large and sparse matrices,
  and to References~\cite{BS17,CCEOZ22,CCZ23phi,CCZ23,CMM23,LZL21,LZZ22,MMPC22} when
  $X$ is a Kronecker sum.
  The key points to develop the directional split integrator
  based on method~\eqref{eq:ETD2RK} were the observation that
  \begin{equation}\label{eq:phisplit}
  \begin{aligned}
    \varphi_1(\tau K)&= \varphi_1(\tau A_d)\otimes\cdots\otimes\varphi_1(\tau A_1)
    +\mathcal{O}(\tau^2), \\
    \varphi_2(\tau K)&= 2^{d-1}\varphi_2(\tau A_d)\otimes\cdots\otimes\varphi_2(\tau A_1)
        +\mathcal{O}(\tau^2),
  \end{aligned}
  \end{equation}
  and that the actions of the right hand sides can be efficiently realized in
  tensor form with Tucker operators (see {\color{black}the next section and}
  References~\cite{CCEOZ22,CCZ23} for insights on the relevant tensor-matrix
  operations).
  However, the second order accuracy  of the formulas with respect to the time
  step size essentially
  limits their applicability to schemes of at most order two.

  In this paper we aim at introducing directional split approximations of
  third order with respect to $\tau$ for
  $\varphi$-functions of matrices with $d$-dimensional Kronecker sum structure.
  The formulas derived in Section~\ref{sec:approxds} allow for the efficient
  construction
  and employment, for instance, of exponential Runge--Kutta integrators of order three for ODEs systems
  with
  Kronecker sum structure. In particular, since the realization of the approximations
  heavily exploits BLAS operations, the resulting schemes can also be efficiently 
  implemented on modern hardware architectures such as multi-core Central 
  Processing Units (CPUs) and Graphic Processing Units (GPUs). 
  The effectiveness of the approach and the performance results on 
  two popular systems of diffusion--reaction equations are shown in the numerical
  examples of Section~\ref{sec:numexp}. We finally draw the conclusions in 
  Section~\ref{sec:conc}.
\section{Third order directional splitting}
\label{sec:approxds}
  We focus here on three-stage exponential Runge--Kutta integrators of
  the form
  \begin{subequations}\label{eq:ERK3}
  \begin{equation}
    \begin{aligned}
    \bb u_{n2}&=\bb u_n+c_2\tau\varphi_1(c_2\tau K)\bb f(t_n,\bb u_n), \\ 
    \bb u_{n3}&=\bb u_n+c_3\tau\varphi_1(c_3\tau K)\bb f(t_n,\bb u_n)
     + \tau a_{32}(\tau K) \bb d_{n2},\\
    \bb u_{n+1}&=\bb u_n+\tau\varphi_1(\tau K)\bb f(t_n,\bb u_n)+
    \tau (b_2(\tau K)\bb d_{n2} + b_3(\tau K)\bb d_{n3}),
    \end{aligned}
    \end{equation}
    where
    \begin{equation}
      \bb d_{ni}=\bb g(t_n+c_i\tau,\bb u_{ni})-\bb g(t_n,\bb u_n), \quad i=2,3,
    \end{equation}
    \end{subequations}
    $c_2$ and $c_3$ are scalars, while $a_{32}(\cdot)$, $b_2(\cdot)$, and
    $b_3(\cdot)$ are (linear combinations of) $\varphi$-functions.
    From Reference~\cite[Section 5.2]{HO05}, we know that we can
    construct integrators of order three
    in which the coefficients $a_{32}(\cdot)$, $b_2(\cdot)$, and $b_3(\cdot)$
    involve just the $\varphi_1$ and $\varphi_2$
    functions. Therefore, we will fully develop our approximation techniques
    using these two functions only. The generalization to higher order
    $\varphi$-functions is straightforward, and is briefly discussed in
    Remark~\ref{rem:phiell}.

    The starting point for the realization of efficient directional split
    exponential integrators is the following formula for the matrix
    exponential of the Kronecker sum
    $K$~\cite{CCEOZ22,CCZ23}
    \begin{equation}\label{eq:expTucker}
      \begin{split}
      \exp(\tau K)\bb v&=\exp(\tau A_{\otimes 1})\cdots\exp(\tau A_{\otimes d})\bb v\\
&=      (\exp(\tau A_d)\otimes\cdots\otimes\exp(\tau A_1))\bb v\\
&=
      \mathrm{vec}(\bb V\times_1\exp(\tau A_1)\times_2\cdots\times_d\exp(\tau A_d)).
\end{split}
      \end{equation}
    Here $\bb V \in \CC^{n_1\times \cdots \times n_d}$ is an order-$d$ tensor such that
    $\mathrm{vec}(\bb V)=\bb v$, {\color{black}being vec the operator which
    stacks by columns the input tensor,} and $\times_\mu$ denotes the $\mu$-mode product,
    i.e., a tensor-matrix product along the direction $\mu$. 
    {\color{black} The concatenation of $\mu$-mode products is referred to as
    Tucker operator.
    Since these are core concepts in the manuscript, we briefly
    describe them in the following. A thorough explanation with full details can
    be found, for instance, in References~\cite{CCZ23,K06,KB09}.
    Given a generic order-$d$ tensor $\bb T\in\CC^{n_1\times\cdots \times n_d}$
    (with elements denoted as $t_{i_1\ldots i_d}$)
    and a matrix $L_\mu \in \CC^{n_\mu\times n_\mu}$ of elements
    $\ell_{i j}^\mu$, the \emph{$\mu$-mode} product of $\bb T$ with $L_\mu$
    (denoted as $\bb T\times_\mu L_\mu$) is the tensor
    $\bb S \in \CC^{n_1\times\cdots \times n_d}$ defined elementwise as
\begin{equation*}
  s_{i_1\ldots i_d} = \sum_{j_\mu=1}^{n_\mu}
                      t_{i_1\ldots i_{\mu-1}j_\mu i_{\mu+1}\ldots i_d}
                      \ell_{i_\mu j_\mu}^\mu.
\end{equation*}
This corresponds to multiplying the matrix
$L_\mu$ onto the \emph{$\mu$-fibers} of the tensor $\bb T$, i.e., 
vectors along direction $\mu$ which are
generalizations to tensors of columns and rows of a matrix.
The concatenation of $\mu$-mode products with the matrices
$L_1,\ldots,L_d$, that is the tensor $\bb S$ with elements
\begin{equation*}
  s_{i_1\ldots i_d} = \sum_{j_d=1}^{n_d}\cdots
                      \sum_{j_1=1}^{n_1}t_{j_1\ldots j_d}
                      \prod_{\mu=1}^d \ell_{i_\mu j_\mu}^\mu,
\end{equation*}
is denoted by $\bb T\times_1 L_1\times_2\cdots\times_d L_d$ and
called \emph{Tucker operator}. In terms of computational cost, a
single $\mu$-mode product requires~$\mathcal{O}(Nn_\mu)$
floating-point operations (with $N=n_1\cdots n_d$),
and it can be implemented by a single (full) matrix-matrix product. In practice, this
can be realized with a single GEMM (GEneral Matrix Multiply) call of level 3 BLAS 
(Basic Linear Algebra Subprograms), 
whose highly optimized
implementations are available essentially for any kind of modern computer 
architecture (we mention, for instance, References~\cite{mkl,XQY12,cublas}).
Consequently, the Tucker operator
has an overall computational cost of $\mathcal{O}(N(n_1+\cdots+n_d))$, 
and it can be realized with $d$ GEMM calls of 
level 3 BLAS.
Finally, the connection between the Kronecker product and the Tucker operator 
is given by the formula (see~\cite[Lemma 2.1]{CCZ23})
\begin{equation}\label{eq:krontomu}
  (L_d \otimes \cdots \otimes L_1)\bb t = 
  \mathrm{vec}(\bb T \times_1 L_1 \times_2 \cdots \times_d L_d), \quad \bb t = \mathrm{vec}(\bb T).
\end{equation}

    Let us now return to the action of the matrix exponential. 
    First of all, notice that in the case $d=2$ formula~\eqref{eq:expTucker}
    simply reduces to 
  \begin{equation}\label{eq:exp2d}
    \exp(\tau(A_{\otimes 1}+A_{\otimes 2}))\bb v=\mathrm{vec}\left(
    \exp(\tau A_1)\bb V\exp(\tau A_2)^{\sf T}\right),
  \end{equation}
  since for order-2 tensors (i.e., matrices) 1- and 2-mode products are just
  the standard matrix-matrix and matrix-matrix transpose multiplications,
  respectively. Hence, after computing the small sized matrix exponential functions,
  formula~\eqref{eq:exp2d} can be efficiently realized with two 
  GEMM calls.
    In general, the
    advantage of the tensor approach in formula~\eqref{eq:expTucker}} is that it 
    allows to compute
    the action $\exp(\tau K)\bb v$ through a single Tucker operator
    without assembling the matrix itself {\color{black}or computing Kronecker products}.
    In fact, we rely
    just on high performance BLAS after the computation of the
    small sized matrix exponentials $\exp(\tau A_\mu)$.
    {\color{black}We remark that the main cost of the procedure lies in the 
    computation of the Tucker operator, while computing the needed matrix
    exponentials is of negligible burden~\cite{CCEOZ22}.}
    Concerning the functions $\varphi_\ell$ with $\ell>0$,
    which are needed for the
    exponential
    integrators considered in this work, we notice that the first equality in
    formula~\eqref{eq:expTucker} is not valid anymore. However, in
    Reference~\cite{CC23} it is shown that
  \begin{equation}\label{eq:secondord}
    \begin{split}
      \varphi_\ell(\tau K)\bb v &=
      \ell!^{d-1}\varphi_\ell(\tau A_{\otimes 1})\cdots\varphi_\ell(\tau A_{\otimes d})
     \bb v + \mathcal{O}(\tau^2)\\
     &=\ell!^{d-1}\left(\varphi_\ell(\tau A_d)\otimes\cdots\otimes\varphi_\ell(\tau A_1)\right)
     \bb v + \mathcal{O}(\tau^2)\\
    &=\mathrm{vec}\left(\ell!^{d-1}\bb V\times_1\varphi_\ell(\tau A_1)\times_2\cdots\times_d
    \varphi_\ell(\tau A_d)\right)+\mathcal{O}(\tau^2),
    \end{split}
  \end{equation}
  of which expressions~\eqref{eq:phisplit} are particular cases. When $d=2$
  the formula simply reduces to
  \begin{equation}\label{eq:phiell2d}
    \varphi_\ell(\tau(A_{\otimes 1}+A_{\otimes 2}))\bb v=\mathrm{vec}\left(
    \varphi_\ell(\tau A_1)(\ell!\bb V)\varphi_\ell(\tau A_2)^{\sf T}\right)
    +\mathcal{O}(\tau^2),
  \end{equation}
  which can be again realized with two GEMM calls after computing the small sized
  $\varphi$-functions.
  Similarly to the action of the matrix exponential, the generic $d$-dimensional
  formulation~\eqref{eq:secondord} requires one Tucker operator and can be implemented with
  $d$ GEMM calls as floating-point operations. Clearly, 
  approximation~\eqref{eq:secondord} cannot be directly employed
  in formulation~\eqref{eq:ERK3}, since it would lead to an order reduction of
  the resulting integrator. In the following subsections, we will then look for
  third order directional split approximations of $\varphi$-functions
  \emph{similar} to formula~\eqref{eq:secondord}, in the sense that their
  realization will require few Tucker operators (which, we emphasize again,
  constitute the major computational cost of computing the approximation).
    \subsection{Two-term two-dimensional splitting}\label{sec:2dds}
    To increase the approximation order of formula~\eqref{eq:phiell2d},
    we look for a combination of the form
  {\color{black}
  \begin{equation}\label{eq:split2d}
  \begin{split}
    \varphi_\ell(\tau(A_{\otimes 1}+A_{\otimes 2}))&=
    \eta_{1,2}\varphi_{\ell_1}(\alpha_{1,2}\tau A_2)\otimes
    \varphi_{\ell_1}(\alpha_{1,1}\tau A_1)\\
    &\phantom{=}+
    \eta_{2,2}\varphi_{\ell_2}(\alpha_{2,2}\tau A_2)\otimes
    \varphi_{\ell_2}(\alpha_{2,1}\tau A_1)+
    \mathcal{O}(\tau^3),
  \end{split}
  \end{equation}
  where $\ell_i>0$ and $\eta_{i,2},\alpha_{i,\mu}\in\CC$
  (with $i=1,2$ and $\mu=1,2$)
  are parameters to be determined. 
  }
    By Taylor expanding, we directly obtain the conditions
{\color{black}
\begin{subequations}\label{eq:phi1phi22d}
\begin{align}
  \frac{\eta_{1,2}}{\ell_1!^2}+\frac{\eta_{2,2}}{\ell_2!^2}&=\frac{1}{\ell!},
  \label{eq:deg0}\\
\frac{\eta_{1,2}\alpha_{1,1}}{\ell_1!(\ell_1+1)!}+
\frac{\eta_{2,2}\alpha_{2,1}}{\ell_2!(\ell_2+1)!}&=\frac{1}{(\ell+1)!},\;
\frac{\eta_{1,2}\alpha_{1,2}}{\ell_1!(\ell_1+1)!}+
\frac{\eta_{2,2}\alpha_{2,2}}{\ell_2!(\ell_2+1)!}=\frac{1}{(\ell+1)!},\label{eq:deg1}\\
\frac{\eta_{1,2}\alpha_{1,1}^2}{\ell_1!(\ell_1+2)!}+
\frac{\eta_{2,2}\alpha_{2,1}^2}{\ell_2!(\ell_2+2)!}&=\frac{1}{(\ell+2)!},\;
\frac{\eta_{1,2}\alpha_{1,2}^2}{\ell_1!(\ell_1+2)!}+
\frac{\eta_{2,2}\alpha_{2,2}^2}{\ell_2!(\ell_2+2)!}=\frac{1}{(\ell+2)!},\label{eq:deg2}\\
\frac{\eta_{1,2}\alpha_{1,1}\alpha_{1,2}}{(\ell_1+1)!^2}+
\frac{\eta_{2,2}\alpha_{2,1}\alpha_{2,2}}{(\ell_2+1)!^2}&=\frac{2}{(\ell+2)!}.\label{eq:deg2ab}
\end{align}
\end{subequations}
}
Then, we have the following result.
\begin{theorem}
  The coefficients in
  Table~\ref{tab:phi1phi22d} are the solutions of system~\eqref{eq:phi1phi22d}
  for $\ell=1,2$ with $\ell_1=1$ and $\ell_2=2$.
\begin{table}[!ht]
\caption{Coefficients for the two-term two-dimensional
  splitting~\eqref{eq:split2d} with $\ell_1=1$ and $\ell_2=2$.}
\label{tab:phi1phi22d}
\centering
\bgroup\def\arraystretch{1.4}
\begin{tabular}{c|cc|cc}
& \multicolumn{2}{c|}{$\ell=1$} & \multicolumn{2}{c}{$\ell=2$}\\
\hline
{\color{black}$\eta_{1,2}$} & $-\frac{5}{4}$ & $\frac{7}{4}\pm\frac{3\sqrt{2}}{2}\rmi$ & $-\frac{4}{3}$ & $\frac{2}{3}\pm\frac{2\sqrt{3}}{3}\rmi$ \\
{\color{black}$\alpha_{1,1}$} & $\pm \frac{4\sqrt{10}}{15}+\frac{4}{3}$ & $\frac{12}{11}\mp\frac{4\sqrt{2}}{11}\rmi$ & $\pm \frac{\sqrt{33}}{8}+\frac{9}{8}$ & $\frac{3}{4}\mp\frac{\sqrt{3}}{4}\rmi$ \\
{\color{black}$\alpha_{1,2}$} & $\mp \frac{4\sqrt{10}}{15}+\frac{4}{3}$ & $\frac{12}{11}\mp\frac{4\sqrt{2}}{11}\rmi$ & $\mp\frac{\sqrt{33}}{8}+\frac{9}{8}$ & $\frac{3}{4}\mp\frac{\sqrt{3}}{4}\rmi$ \\
{\color{black}$\eta_{2,2}$} & $9$ &  $-3\mp 6\sqrt{2}\rmi$ & $\frac{22}{3}$ &  $-\frac{2}{3}\mp\frac{8\sqrt{3}}{3}\rmi$ \\
{\color{black}$\alpha_{2,1}$} & $\pm \frac{2\sqrt{10}}{9}+\frac{16}{9}$ & $\frac{4}{3}\mp \frac{2\sqrt{2}}{3}\rmi$ & $\pm \frac{3\sqrt{33}}{22}+\frac{3}{2}$ & $\frac{6}{7}\mp \frac{3\sqrt{3}}{7}\rmi$\\
{\color{black}$\alpha_{2,2}$} & $\mp\frac{2\sqrt{10}}{9}+\frac{16}{9}$ & $\frac{4}{3}\mp \frac{2\sqrt{2}}{3}\rmi$ & $\mp \frac{3\sqrt{33}}{22}+\frac{3}{2}$ & $\frac{6}{7}\mp \frac{3\sqrt{3}}{7}\rmi$
\end{tabular}
\egroup
\end{table}
\end{theorem}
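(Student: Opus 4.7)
My plan is to treat this as a constructive derivation rather than as a pure verification, since the entries in Table~\ref{tab:phi1phi22d}---in particular the paired $\pm$/$\mp$ surds and the complex-conjugate structure---are naturally explained by the algebraic symmetries of system~\eqref{eq:phi1phi22d}, and recovering them from those symmetries is essentially what the proof needs to do.

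First I would observe that the system is invariant under the simultaneous swap $\alpha_{i,1}\leftrightarrow \alpha_{i,2}$ for $i=1,2$: equations~\eqref{eq:deg0} and~\eqref{eq:deg2ab} are manifestly symmetric, while the two equations in~\eqref{eq:deg1} (and analogously in~\eqref{eq:deg2}) are exchanged by that swap. This motivates the change of variables $s_i=\alpha_{i,1}+\alpha_{i,2}$ and $q_i=\alpha_{i,1}\alpha_{i,2}$, so that $\alpha_{i,1}$ and $\alpha_{i,2}$ are the two roots of $X^{2}-s_{i}X+q_{i}$. Taking sums and differences of the paired equations in~\eqref{eq:deg1} and~\eqref{eq:deg2}, and using $\alpha_{i,1}^{2}+\alpha_{i,2}^{2}=s_{i}^{2}-2q_{i}$ together with $\alpha_{i,1}^{2}-\alpha_{i,2}^{2}=(\alpha_{i,1}-\alpha_{i,2})s_{i}$, I would recast the six equations as four ``symmetric'' equations in $\eta_{i,2}, s_{i}, q_{i}$ plus two ``antisymmetric'' equations that are linear in the differences $\delta_i=\alpha_{i,1}-\alpha_{i,2}$.

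Next I would specialize to $\ell_{1}=1$, $\ell_{2}=2$ so that all factorials become explicit. The two antisymmetric equations then form a homogeneous linear system in $(\eta_{1,2}\delta_{1}, \eta_{2,2}\delta_{2})$; a direct computation shows that its determinant collapses, up to a nonzero factor, to $3s_{2}-4s_{1}$. This cleanly bifurcates the solution set into a real branch (where $\delta_{i}\neq 0$ and $3s_{2}=4s_{1}$) and a complex branch (where $\delta_{1}=\delta_{2}=0$, hence $\alpha_{i,1}=\alpha_{i,2}=s_{i}/2$ and $q_{i}=s_{i}^{2}/4$). On the real branch, the symmetric equations coming from~\eqref{eq:deg0} and~\eqref{eq:deg1}, together with $3s_{2}=4s_{1}$, determine $\eta_{1,2}, \eta_{2,2}, s_{1}, s_{2}$ uniquely; $q_{i}$ is then extracted from~\eqref{eq:deg2ab} and the sum of~\eqref{eq:deg2}, and finally $\alpha_{i,1}, \alpha_{i,2}$ emerge as the two roots of $X^{2}-s_{i}X+q_{i}$, producing the $\pm$/$\mp$ surd pairs in the first columns for each $\ell$. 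On the complex branch the reduced polynomial system in $(\eta_{1,2}, s_{1}, s_{2})$ admits complex-conjugate solutions, which give the entries in the second columns.

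The main obstacle is not technical depth but careful bookkeeping: one must separately confirm consistency of each branch with the remaining equations (notably~\eqref{eq:deg2ab} on the complex branch), and align the sign convention so that the $\pm$/$\mp$ pattern of Table~\ref{tab:phi1phi22d} is matched entry by entry. As a self-contained shortcut, and indeed as the most elementary way to close the proof in writing, one may alternatively verify the theorem by substituting each tabulated value directly into the six identities~\eqref{eq:deg0}--\eqref{eq:deg2ab}; this reduces to routine algebraic simplifications of surd and complex-number expressions for each $\ell$ and each sign choice.
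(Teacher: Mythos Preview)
Your approach is sound but genuinely different from the paper's. The paper proceeds by direct sequential elimination: it expresses $\eta_{2,2}$ from~\eqref{eq:deg0}, then $\alpha_{2,1},\alpha_{2,2}$ from~\eqref{eq:deg1}, substitutes into~\eqref{eq:deg2} to obtain two quadratics in $\alpha_{1,1}$ and $\alpha_{1,2}$ with $\eta_{1,2}$ as parameter, forms the four possible root pairs, and plugs each into~\eqref{eq:deg2ab}. This yields four univariate equations in $\eta_{1,2}$: two linear ones (giving the real column), one quartic with a complex-conjugate pair of roots (giving the complex column), and one quartic with no admissible solution. Your symmetry-based route instead passes to $s_i,q_i,\delta_i$ and obtains the real/complex bifurcation from the vanishing of the $2\times 2$ determinant $3s_2-4s_1$; this explains structurally both the $\pm/\mp$ surd pattern and the equality $\alpha_{i,1}=\alpha_{i,2}$ in the complex column, which in the paper's argument emerge only after the case analysis.

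There is, however, a counting slip in your real-branch analysis. You claim that~\eqref{eq:deg0}, the symmetric part of~\eqref{eq:deg1}, and the relation $3s_2=4s_1$ determine $\eta_{1,2},\eta_{2,2},s_1,s_2$ uniquely, but that is only three scalar equations for four unknowns; for $\ell=1$, for instance, they leave the one-parameter family $\eta_{2,2}=4(1-\eta_{1,2})$, $s_1=18/(\eta_{1,2}+8)$, $s_2=4s_1/3$. To close the system on this branch you must also use the surviving antisymmetric relation $6\eta_{1,2}\delta_1+\eta_{2,2}\delta_2=0$ (equivalently, after squaring, $36\eta_{1,2}^{2}(s_1^{2}-4q_1)=\eta_{2,2}^{2}(s_2^{2}-4q_2)$) in conjunction with~\eqref{eq:deg2ab} and the sum of~\eqref{eq:deg2}. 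With that correction the six symmetric unknowns are indeed determined and your argument goes through; as written, though, the step is incomplete.
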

\begin{proof}
By writing {\color{black}$\eta_{2,2}$} in terms of {\color{black}$\eta_{1,2}$} from equation~\eqref{eq:deg0},
{\color{black}$\alpha_{2,1}$} and {\color{black}$\alpha_{2,2}$} in terms of {\color{black}$\alpha_{1,1}$} and {\color{black}$\alpha_{1,2}$}
from equations~\eqref{eq:deg1}, respectively,
we get from~\eqref{eq:deg2} two equations of degree two
in {\color{black}$\alpha_{1,1}$} and {\color{black}$\alpha_{1,2}$} in terms of {\color{black}$\eta_{1,2}$}
which give four possible pairs.
If we now substitute each pair into equation~\eqref{eq:deg2ab} we get
one equation of degree four
in {\color{black}$\eta_{1,2}$} with no admissible solution, one equation of
degree four in {\color{black}$\eta_{1,2}$} with two complex conjugate solutions,
and two equations of degree one in {\color{black}$\eta_{1,2}$},
each of which has one real solution.
Substituting back gives the desired coefficients.
\end{proof}

Notice that formula~\eqref{eq:split2d} allows for a third order approximation 
of the $\varphi$-functions which,
{\color{black}thanks to equivalence~\eqref{eq:krontomu},}
requires in tensor form the computation of two Tucker operators.
\subsection{Two-term \texorpdfstring{$d$}{d}-dimensional splitting with complex
  coefficients}
In dimension $d$ we consider an approximation of the form
  \begin{equation}\label{eq:splitnd}
  \begin{split}
    \varphi_\ell(\tau(A_{\otimes 1}+\cdots+ A_{\otimes d}))&=
    \eta_{1,d}\varphi_{\ell_1}(\alpha_{1,d}\tau A_d)\otimes\cdots\otimes
    \varphi_{\ell_1}(\alpha_{1,1}\tau A_1)\\
    &\phantom{=}+\eta_{2,d}\varphi_{\ell_2}(\alpha_{2,d}\tau A_d)\otimes\cdots\otimes
    \varphi_{\ell_2}(\alpha_{2,1}\tau A_1)+
    \mathcal{O}(\tau^3),
  \end{split}
  \end{equation}
  which again can be realized in tensor form with two Tucker operators.
  Similarly to the previous case, the coefficients have to satisfy the nonlinear system
\begin{subequations}\label{eq:phi1phi2nd}
\begin{align}
  \frac{\eta_{1,d}}{\ell_1!^d}+\frac{\eta_{2,d}}{\ell_2!^d}&=\frac{1}{\ell!},
  \label{eq:deg0nd}\\
\frac{\eta_{1,d}\alpha_{1,\mu}}{\ell_1!^{d-1}(\ell_1+1)!}+
\frac{\eta_{2,d}\alpha_{2,\mu}}{\ell_2!^{d-1}(\ell_2+1)!}&=\frac{1}{(\ell+1)!},&
\mu&=1,\ldots,d,  \label{eq:deg1and}\\
\frac{\eta_{1,d}\alpha_{1,\mu}^2}{\ell_1!^{d-1}(\ell_1+2)!}+
\frac{\eta_{2,d}\alpha_{2,\mu}^2}{\ell_2!^{d-1}(\ell_2+2)!}&=\frac{1}{(\ell+2)!},&
\mu&=1,\ldots,d,\label{eq:deg2a2nd}\\
\frac{\eta_{1,d}\alpha_{1,\mu}\alpha_{1,\nu}}{\ell_1!^{d-2}(\ell_1+1)!^2}+
\frac{\eta_{2,d}\alpha_{2,\mu}\alpha_{2,\nu}}{\ell_2!^{d-2}(\ell_2+1)!^2}&
=\frac{2}{(\ell+2)!},&\mu,\nu&=1,\ldots,d,\ \mu< \nu.\label{eq:deg2abnd}
\end{align}
\end{subequations}
\begin{theorem}\label{thm:ddimcplx}
  For $\ell=1,2$, $d>2$, $\ell_1=1$, and $\ell_2=2$
  there exists no real solution to system~\eqref{eq:phi1phi2nd}. On the
  other hand, the solutions of system~\eqref{eq:phi1phi2nd} are given by
  the complex coefficients in Table~\ref{tab:phi1phi2nd}.
\begin{table}[!ht]
\caption{Complex coefficients for the two-term $d$-dimensional
    splitting~\eqref{eq:splitnd} with $l_1=1$ and $l_2=2$.}
\label{tab:phi1phi2nd}
\centering
\bgroup\def\arraystretch{1.4}
\begin{tabular}{c|c|c}
& $\ell=1$ & $\ell=2$ \\
\hline
$\eta_{1,d}$ & $\frac{7}{4}\pm\frac{3\sqrt{2}}{2}\rmi$ & $\frac{2}{3}\pm\frac{2\sqrt{3}}{3}\rmi$ \\
$\alpha_{1,\mu}$ & $\frac{12}{11}\mp\frac{4\sqrt{2}}{11}\rmi$ & $\frac{3}{4}\mp\frac{\sqrt{3}}{4}\rmi$ \\
$\eta_{2,d}$ & $2^{d-2}\left(-3\mp 6\sqrt{2}\rmi\right)$ & $2^{d-2}\left(-\frac{2}{3}\mp\frac{8\sqrt{3}}{3}\rmi\right)$ \\
$\alpha_{2,\mu}$ & $\frac{4}{3}\mp \frac{2\sqrt{2}}{3}\rmi$ & $\frac{6}{7}\mp \frac{3\sqrt{3}}{7}\rmi$
\end{tabular}
\egroup
\end{table}
\end{theorem}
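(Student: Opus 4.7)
My plan is to proceed in three steps: (a) show that, for each $\mu$, the pair $(\alpha_{1,\mu}, \alpha_{2,\mu})$ is forced to lie in a set of at most two candidate points, common across all $\mu$; (b) use the hypothesis $d>2$ together with the cross-equations~\eqref{eq:deg2abnd} to argue that all $d$ directions must select the same candidate, so $\alpha_{1,\mu}\equiv \alpha_1$ and $\alpha_{2,\mu}\equiv \alpha_2$; (c) solve the resulting four-equation system in $(\eta_{1,d},\eta_{2,d},\alpha_1,\alpha_2)$ explicitly, observing that no real solution exists.

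For step~(a), with $\ell_1=1$ and $\ell_2=2$ equation~\eqref{eq:deg1and} at a fixed $\mu$ is an affine relation $A\alpha_{1,\mu}+B\alpha_{2,\mu}=C$ and~\eqref{eq:deg2a2nd} is a conic $D\alpha_{1,\mu}^2+E\alpha_{2,\mu}^2=F$, where $A,\ldots,F$ depend only on $\eta_{1,d}, \eta_{2,d}, d, \ell$ and not on $\mu$. Their intersection contains at most two pairs $(p_1,q_1)$, $(p_2,q_2)$, the same for every $\mu$, once the degenerate cases $\eta_{1,d}=0$ and $\eta_{2,d}=0$ are discarded (both lead to direct contradictions between~\eqref{eq:deg1and}--\eqref{eq:deg2a2nd} after a short computation). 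For step~(b) I would suppose that both candidates are in use. Since $d>2$, pigeonhole gives at least two $\mu$'s sharing a candidate, so~\eqref{eq:deg2abnd} yields $G p_i^2+H q_i^2=I$ for some $i\in\{1,2\}$, while the mixed pair yields $G p_1 p_2+H q_1 q_2=I$. Subtracting produces a $2\times 2$ homogeneous system in $(p_1-p_2, q_1-q_2)$ whose determinant is proportional to $p_1 q_2-p_2 q_1$: if it is nonzero the two candidates coincide, and if it vanishes the two points are proportional and coincide anyway thanks to $C=1/(\ell+1)!\neq 0$ on the affine constraint. Either conclusion contradicts the assumption, so the candidate is unique.

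For step~(c), introducing $\tilde\eta_2=\eta_{2,d}/2^d$ absorbs every explicit power of $2^{d-2}, 2^{d-1}, 2^d$ appearing in~\eqref{eq:phi1phi2nd}, and the resulting four-equation system in $(\eta_{1,d},\tilde\eta_2,\alpha_1,\alpha_2)$ is $d$-independent and coincides with the equal-$\alpha$ sub-ansatz of the $d=2$ analysis of Section~\ref{sec:2dds}. Eliminating $\alpha_1^2,\alpha_2^2$ between the two quadratic equations gives $\eta_{1,d}\alpha_1^2=24/(\ell+2)!$ and $\tilde\eta_2\alpha_2^2=-36/(\ell+2)!$; substituting $a=\eta_{1,d}\alpha_1$ into the two linear constraints then collapses the problem to a single scalar quadratic, namely $a^2-6a+11=0$ for $\ell=1$ and $3a^2-6a+4=0$ for $\ell=2$. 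Their discriminants $-8$ and $-12$ are negative, proving the absence of real solutions, and the complex-conjugate roots, back-substituted, recover precisely the coefficients of Table~\ref{tab:phi1phi2nd}. I expect the main obstacle to be step~(b): a careful case analysis is required to rule out simultaneously the degenerate sub-case $p_1 q_2=p_2 q_1$ and the $\eta_{i,d}=0$ cases; once these are dispatched, the rest is routine elimination.
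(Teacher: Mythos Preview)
Your argument is viable and your step~(c) is entirely correct---the reduction to the scalar quadratics $a^2-6a+11=0$ (for $\ell=1$) and $3a^2-6a+4=0$ (for $\ell=2$), both with negative discriminant, is a clean way to certify the absence of real solutions once the $\alpha$'s are known to be direction-independent. The paper, however, takes a shorter route by leaning on Theorem~1. After writing $\eta_{2,d}=2^d(1/\ell!-\eta_{1,d})$ from~\eqref{eq:deg0nd} (equivalent to your $\tilde\eta_2$ substitution), the remaining equations are $d$-independent, and restricting to any pair $\mu_1\neq\mu_2$ reproduces exactly system~\eqref{eq:phi1phi22d}. Hence for each pair one lands on one of the already classified two-dimensional solutions of Table~\ref{tab:phi1phi22d}, all sharing the same value of $\eta_{1,d}$. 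The real 2D solutions have $\alpha_{1,1}\neq\alpha_{1,2}$, which is incompatible with three or more directions (taking the pairs $(1,2)$, $(1,3)$, $(2,3)$ forces $\alpha_{1,2}=\alpha_{1,3}$ and simultaneously $\alpha_{1,2}\neq\alpha_{1,3}$); the complex 2D solutions already have $\alpha_{1,1}=\alpha_{1,2}$ and give Table~\ref{tab:phi1phi2nd} directly. No line--conic, pigeonhole, or determinant analysis is needed.

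One caveat on your step~(b): the $2\times 2$ determinant you describe does not come out proportional to $p_1q_2-p_2q_1$; a direct computation with $\ell_1=1$, $\ell_2=2$ gives it proportional to $2q_1-3p_1$, so the degenerate sub-case would need a different treatment than the one you sketch. A quicker fix, staying within your framework, is to note that once two directions share a candidate $(p_1,q_1)$, that candidate satisfies~\eqref{eq:deg0nd}--\eqref{eq:deg2a2nd} \emph{and} the diagonal instance of~\eqref{eq:deg2abnd}, i.e., precisely your step-(c) system; this already forces $\eta_{1,d}$ to be complex and settles the nonexistence of real solutions without ever resolving the determinant case split.
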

\begin{proof}
  Since $\ell_1=1$ and $\ell_2=2$, we have from equation~\eqref{eq:deg0nd} 
  that $\eta_{2,d}=2^d\left(\frac{1}{\ell!}-\eta_{1,d}\right)$.
  Substituting in expressions~\eqref{eq:deg1and}--\eqref{eq:deg2abnd}, we
  obtain a set of equations that does not depend anymore on $d$. Let us then
  consider, among them, the equations for $\mu=\mu_1$ and $\mu=\mu_2$,
  with $\mu_1\neq\mu_2$. Their solutions correspond to the ones obtained for
  the two-dimensional case (see Table~\ref{tab:phi1phi22d} for a summary),
  i.e., {\color{black}$\eta_{1,d}=\eta_{1,2}$,
  $\eta_{2,d}=2^{d-2}\eta_{2,2}$, $\alpha_{1,\mu_1}=\alpha_{1,1}$,
  $\alpha_{1,\mu_2}=\alpha_{1,2}$, $\alpha_{2,\mu_1}=\alpha_{2,1}$,
  and $\alpha_{2,\mu_2}=\alpha_{2,2}$}.
  This has to be valid for each $\mu_1\neq\mu_2$, which
  implies {\color{black}$\alpha_{1,\mu}=\alpha_{1,1}$} and
  {\color{black}$\alpha_{2,\mu}=\alpha_{2,1}$}, for $\mu=1,\ldots,d$, and
  hence excludes the real solutions.
\end{proof}%

{\color{black}Notice that, in the case $d=2$, the coefficients in 
Table~\ref{tab:phi1phi2nd} reduce to the complex coefficients 
in Table~\ref{tab:phi1phi22d}.}
\subsection{Three-term \texorpdfstring{$d$}{d}-dimensional
  splitting with real coefficients}
The approximation in the $d$-dimensional case derived in the previous section
required{\color{black}, in tensor form,} two \emph{complex} Tucker operators to be computed. Here, we present
an alternative formula for $d>2$ which works entirely in \emph{real} arithmetic
but needs three Tucker operators. We then write our
ansatz as
\begin{equation}\label{eq:splitnd3}
  \begin{split}
    \varphi_\ell(\tau(A_{\otimes 1}+\cdots+ A_{\otimes d}))&=
    \eta_{1,d}\varphi_{\ell_1}(\alpha_{1,d}\tau A_d)\otimes\cdots\otimes
    \varphi_{\ell_1}(\alpha_{1,1}\tau A_1)\\
    &\phantom{=}+
    \eta_{2,d}\varphi_{\ell_2}(\alpha_{2,d}\tau A_d)\otimes\cdots\otimes
    \varphi_{\ell_2}(\alpha_{2,1}\tau A_1)\\
    &\phantom{=}+\eta_{3,d}\varphi_{\ell_3}(\alpha_{3,d}\tau A_d)\otimes\cdots\otimes
    \varphi_{\ell_3}(\alpha_{3,1}\tau A_1)+
    \mathcal{O}(\tau^3)
\end{split}
  \end{equation}
  and look for the coefficients which satisfy the nonlinear system
\begin{subequations}\label{eq:phi1phi2nd3t}
\begin{align}
  \frac{\eta_{1,d}}{\ell_1!^d}+\frac{\eta_{2,d}}{\ell_2!^d}
  +\frac{\eta_{3,d}}{\ell_3!^d}&=\frac{1}{\ell!},
  \label{eq:deg0nd3t}\\
\frac{\eta_{1,d}\alpha_{1,\mu}}{\ell_1!^{d-1}(\ell_1+1)!}+
\frac{\eta_{2,d}\alpha_{2,\mu}}{\ell_2!^{d-1}(\ell_2+1)!}+
\frac{\eta_{3,d}\alpha_{3,\mu}}{\ell_3!^{d-1}(\ell_3+1)!}&=\frac{1}{(\ell+1)!},\label{eq:deg1and3t}\\
\frac{\eta_{1,d}\alpha_{1,\mu}^2}{\ell_1!^{d-1}(\ell_1+2)!}+
\frac{\eta_{2,d}\alpha_{2,\mu}^2}{\ell_2!^{d-1}(\ell_2+2)!}+
\frac{\eta_{3,d}\alpha_{3,\mu}^2}{\ell_3!^{d-1}(\ell_3+2)!}&=\frac{1}{(\ell+2)!},\label{eq:deg2a2nd3t}\\
\frac{\eta_{1,d}\alpha_{1,\mu}\alpha_{1,\nu}}{\ell_1!^{d-2}(\ell_1+1)!^2}+
\frac{\eta_{2,d}\alpha_{2,\mu}\alpha_{2,\nu}}{\ell_2!^{d-2}(\ell_2+1)!^2}+
\frac{\eta_{3,d}\alpha_{3,\mu}\alpha_{3,\nu}}{\ell_3!^{d-2}(\ell_3+1)!^2}&
=\frac{2}{(\ell+2)!},\quad \mu< \nu.\label{eq:deg2abnd3t}
\end{align}
In this notation, the indexes $\mu$ and $\nu$ run from $1$ to $d$. Then, we have the following result.
\begin{theorem}
  Let $\ell_1=\ell_3=1$ and $\ell_2=2$. Consider the additional conditions
{\small
  \begin{align}
  \label{eq:addcond}
      \frac{\eta_{1,d}\alpha_{1,\mu}^3}{4!}+\frac{\eta_{2,d}\alpha_{2,\mu}^3}{2!^{d-1}5!}
      +\frac{\eta_{3,d}\alpha_{3,\mu}^3}{4!}&=\frac{1}{(\ell+3)!},\\
      \frac{\eta_{1,d}\alpha_{1,\mu}\alpha_{1,\nu}\alpha_{1,\xi}}{2!^3}+
      \frac{\eta_{2,d}\alpha_{2,\mu}\alpha_{2,\nu}\alpha_{2,\xi}}{2^{d-3}3!^3}
      +\frac{\eta_{3,d}\alpha_{3,\mu}\alpha_{3,\nu}\alpha_{3,\xi}}{2!^3}
      &=\frac{6}{(\ell+3)!}, \quad \mu < \nu < \xi,
  \end{align}
}%
  which correspond to matching the third order terms $A_{\otimes\mu}^3$ and
  $A_{\otimes \mu}A_{\otimes \nu}A_{\otimes \xi}$ in the Taylor expansion.
  Again, in the notation the indexes $\mu$, $\nu$, and $\xi$ run from $1$ to $d$. Then,
  for $\ell=1$ and $\ell=2$
  the coefficients in Table~\ref{tab:phi1phi2nd3} are the only solutions of
  system~\eqref{eq:phi1phi2nd3t}.
\begin{table}[!ht]
  \caption{Real coefficients for the three-term $d$-dimensional
    splitting~\eqref{eq:splitnd3} with $\ell_1=\ell_3=1$ and $\ell_2=2$.}
\label{tab:phi1phi2nd3}
\centering
\bgroup\def\arraystretch{1.4}
  \begin{tabular}{c|c|c}
  & $\ell=1$ & $\ell=2$ \\
\hline
  $\eta_{1,d}$ & $\frac{2243}{1350}\pm \frac{440521}{675\sqrt{2991111}}$& $\frac{19}{27}\pm \frac{151}{27\sqrt{2391}}$ \\
  $\alpha_{1,\mu}$ & $\frac{3(5161 \pm \sqrt{2991111})}{15869}$ & $\frac{3(121 \pm \sqrt{2391})}{490}$ \\
  $\eta_{2,d}$ & $-\frac{12544}{675}\cdot 2^{d-3}$ & $-\frac{196}{27}\cdot 2^{d-3}$ \\
  $\alpha_{2,\mu}$ & $\frac{45}{28}$ & $\frac{9}{7}$ \\
  $\eta_{3,d}$ & $\frac{2243}{1350}\mp \frac{440521}{675\sqrt{2991111}}$ & $\frac{19}{27}\mp \frac{151}{27\sqrt{2391}}$ \\
  $\alpha_{3,\mu}$ & $\frac{3(5161 \mp \sqrt{2991111})}{15869}$ & $\frac{3(121 \mp \sqrt{2391})}{490}$
\end{tabular}
\egroup
\end{table}
\end{theorem}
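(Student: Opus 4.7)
The plan is to reduce the combined system \eqref{eq:phi1phi2nd3t}--\eqref{eq:addcond} to a closed polynomial system in a small number of unknowns and then solve it. First, with $\ell_1=\ell_3=1$ and $\ell_2=2$, I would observe that the dependence on $d$ appears only through the prefactor $1/2^{d-1}$ (resp.\ $1/2^{d-2}$) multiplying the terms associated with $\eta_{2,d}$; introducing the rescaled unknown $\tilde{\eta}_2=\eta_{2,d}/2^{d-2}$ eliminates $d$ from every equation except \eqref{eq:deg0nd3t}, where only a harmless factor $1/4$ in front of $\tilde{\eta}_2$ survives. Hence it is enough to treat $d$ as a formal parameter throughout.

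Next, I would argue that $\alpha_{i,\mu}$ is forced to be independent of $\mu$, following the pattern of the proof of Theorem~\ref{thm:ddimcplx}. Fix two directions $\mu_1\neq\mu_2$ and consider the restriction of the equations to these two indices only, together with the corresponding instances of \eqref{eq:addcond}: this yields a polynomial subsystem in the common triple $(\eta_1,\tilde{\eta}_2,\eta_3)$ and in the six quantities $\alpha_{i,\mu_1},\alpha_{i,\mu_2}$. Because the same $\eta$-coefficients must work for \emph{every} pair $(\mu_1,\mu_2)$, and because the system is symmetric under permutations of the $\mu$-indices, the uniqueness of the pairwise solution forces $\alpha_{i,\mu}$ to coincide across all $\mu=1,\dots,d$; denote the common value by $\alpha_i$.

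With this reduction the full system collapses to six polynomial equations in six unknowns $(\eta_1,\tilde{\eta}_2,\eta_3,\alpha_1,\alpha_2,\alpha_3)$. Crucially, for any fixed $(\alpha_1,\alpha_2,\alpha_3)$ each equation is linear in the $\eta_i$'s, so I would pick three of them (for instance \eqref{eq:deg0nd3t}, \eqref{eq:deg1and3t}, and the cubic equation in \eqref{eq:addcond}) whose coefficient matrix is a scaled Vandermonde matrix in the triples $(1,\alpha_i,\alpha_i^2)$, and invert it via Cramer's rule to obtain $\eta_1,\tilde{\eta}_2,\eta_3$ as explicit rational functions of the $\alpha_i$'s. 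Substituting into the remaining three equations (the second-order squared, the cross-quadratic, and the triple-cross cubic) produces three polynomial relations in $\alpha_1,\alpha_2,\alpha_3$ alone.

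The main obstacle is carrying out the elimination in this residual system. I would exploit the symmetry under the simultaneous swap $(\eta_1,\alpha_1)\leftrightarrow(\eta_3,\alpha_3)$, which is natural because $\ell_1=\ell_3$, by passing to the symmetric combinations $\sigma_1=\alpha_1+\alpha_3$ and $\sigma_2=\alpha_1\alpha_3$; this reduces the unknowns to $(\sigma_1,\sigma_2,\alpha_2)$ and puts the equations in a form amenable to a resultant computation eliminating $\alpha_2$. The irrationalities $\sqrt{2991111}$ (for $\ell=1$) and $\sqrt{2391}$ (for $\ell=2$) appearing in Table~\ref{tab:phi1phi2nd3} are expected to emerge as the discriminants of the final univariate polynomial for $\alpha_1$, and the $\pm/\mp$ pattern in the tabulated entries reflects exactly the $1\leftrightarrow 3$ symmetry noted above. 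The proof would then conclude by a direct sign check ensuring the discriminants are positive so that the solutions are real, and by substituting the tabulated values back into \eqref{eq:phi1phi2nd3t} and \eqref{eq:addcond} to verify consistency and completeness.
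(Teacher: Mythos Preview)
Your route differs from the paper's. The paper does not argue $\mu$-independence a priori: it fixes $d=3$, treats the full system~\eqref{eq:phi1phi2nd3t} (which, by the way the \texttt{subequations} environment is set up, already includes~\eqref{eq:addcond}) as a polynomial ideal, computes a Gr\"obner basis under a lexicographic order, and simply reads off from the basis polynomials both the values in Table~\ref{tab:phi1phi2nd3} and the fact that each $\alpha_{i,\mu}$ is the same for $\mu=1,2,3$. Only afterwards does it lift to general $d$ by the device of Theorem~\ref{thm:ddimcplx}. Your plan---rescale $\eta_{2,d}$, establish $\mu$-independence, then eliminate the $\eta$'s linearly and finish with the $1\leftrightarrow 3$ symmetry and a resultant---is more structural and would avoid the computer-algebra black box, which is appealing.

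The gap is your $\mu$-independence step. Restricting to a pair $\{\mu_1,\mu_2\}$ you keep \eqref{eq:deg0nd3t}--\eqref{eq:deg2abnd3t} together with the first line of~\eqref{eq:addcond}: that is eight equations in the nine unknowns $\eta_1,\tilde\eta_2,\eta_3,\alpha_{i,\mu_1},\alpha_{i,\mu_2}$. The second line of~\eqref{eq:addcond} needs three distinct indices and is absent from any pairwise restriction, so the pairwise subsystem is underdetermined and you cannot invoke ``uniqueness of the pairwise solution'' to force $\alpha_{i,\mu_1}=\alpha_{i,\mu_2}$. The fix is to mimic the paper's order of operations: work first with three indices (equivalently $d=3$), where the triple-cross equation is present, carry out your elimination there to obtain the unique solution of Table~\ref{tab:phi1phi2nd3} and observe that it has $\mu$-independent $\alpha$'s, and only then extend to $d>3$ by restricting to triples. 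A small side remark: the three equations you name for the linear solve---\eqref{eq:deg0nd3t}, \eqref{eq:deg1and3t}, and the cubic in~\eqref{eq:addcond}---have column pattern $(1,\alpha_i,\alpha_i^3)$, not $(1,\alpha_i,\alpha_i^2)$; if you want a genuine scaled Vandermonde, use \eqref{eq:deg2a2nd3t} in place of the cubic.
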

\end{subequations}
\begin{proof}
  We first consider system~\eqref{eq:phi1phi2nd3t} in the case $d=3$. Then,
  to determine its solutions we use standard arguments of the Gr\"obner basis
  theory~\cite{CLOS15}.
  We assume the lexicographic ordering 
  $\alpha_{1,1}>_{\mathrm{lex}}\alpha_{1,2}>_{\mathrm{lex}}\alpha_{1,3}
  >_{\mathrm{lex}}\alpha_{2,1}>_{\mathrm{lex}}a_{2,2}>_{\mathrm{lex}}\alpha_{2,3}
  >_{\mathrm{lex}}\alpha_{3,1}>_{\mathrm{lex}}a_{3,2}>_{\mathrm{lex}}\alpha_{3,3}
  >_{\mathrm{lex}}\eta_1>_{\mathrm{lex}}\eta_2>_{\mathrm{lex}}\eta_3$,
  and we let $\mu=1,2,3$.
  Then, for $\ell=1$ a Gr\"obner basis of the ideal associated to the system 
  is
  \begin{multline*}
  \Big\{570887639987 - 724578693084\eta_3 + 218051991900 \eta_3^2,
   12544 + 675\eta_2,\\
   -2243 + 675\eta_1 + 675\eta_3,    -45 + 28 a_{2,\mu},
   6486012633 + 13981255498 \alpha_{3,\mu} - 12113999550\eta_3,\\
   -33768359205 + 13981255498 \alpha_{1,\mu} + 12113999550\eta_3\Big\},
  \end{multline*}
  while for $\ell=2$ is
  \begin{multline*}
  \Big\{840350 - 2453166\eta_3 + 1743039 \eta_3^2, 
  196 + 27\eta_2, 
  -38 + 27\eta_1 + 27\eta_3, \\  81474 + 73990 \alpha_{3,\mu} - 193671\eta_3,
  -9 + 7 \alpha_{2,\mu}, 
  -191100 + 73990 \alpha_{1,\mu} + 193671\eta_3\Big\}.
  \end{multline*}
  The desired solutions of system~\eqref{eq:phi1phi2nd3t} are hence equivalently
  given by the zeros of the polynomials in the Gr\"obner basis.
  Simple calculations lead to the coefficients summarized in
  Table~\ref{tab:phi1phi2nd3}.
  Finally, using arguments similar to the ones in the proof of
  Theorem~\ref{thm:ddimcplx}, we get the result for the $d$-dimensional case.
\end{proof}
\begin{remark}\label{rem:phiell}
As mentioned at the beginning of the section, we focused our
attention to the $\varphi_1$ and $\varphi_2$ functions only, since the class of 
integrators that we considered requires at most the latter. Clearly, 
using a similar approach, one could obtain formulas for different
$\ell$ and $\ell_i$.
\end{remark}
{\color{black}
\subsection{Implementation details}\label{sec:impl}
The directional splitting approximations introduced above
(i.e., formulas~\eqref{eq:split2d}, \eqref{eq:splitnd}, and \eqref{eq:splitnd3})
allow for an efficient tensor-oriented approximation of the 
actions of $\varphi$-functions needed in third-order exponential integrators
of the form~\eqref{eq:ERK3}, thanks to equivalence~\eqref{eq:krontomu}.
For the numerical examples presented in Section~\ref{sec:numexp}, we will employ
in particular the time marching scheme with coefficients
$c_2=\frac{1}{3}$, $c_3=\frac{2}{3}$, $a_{32}(\cdot)=\frac{4}{3}\varphi_{2}(c_3\cdot)$,
$b_2(\cdot)=0$, and $b_3(\cdot)=\frac{3}{2}\varphi_2(\cdot)$
(see also tableau (5.8) in Reference~\cite{HO05}). More explicitly, we have
{\small
  \begin{equation}\label{eq:exprk3}
    \begin{aligned}
    \bb u_{n2}&=\bb u_n+\frac{\tau}{3}\varphi_1\left(\frac{\tau}{3} K\right)\bb f(t_n,\bb u_n), \\ 
    \bb u_{n3}&=\bb u_n+\frac{2\tau}{3}\varphi_1\left(\frac{2\tau}{3} K\right)\bb f(t_n,\bb u_n)\\
     &\phantom{=\bb u_n}\;+ \frac{4\tau}{3}\varphi_2\left(\frac{2\tau}{3} K\right)\left(\bb g\left(t_n+\frac{\tau}{3},\bb u_{n2}\right)-\bb g(t_n,\bb u_n)\right),\\
    \bb u_{n+1}&=\bb u_n+\tau\varphi_1(\tau K)\bb f(t_n,\bb u_n)+
    \frac{3\tau}{2} \varphi_2(\tau K)\left(\bb g\left(t_n+\frac{2\tau}{3},\bb u_{n3}\right)-\bb g(t_n,\bb u_n)\right).
    \end{aligned}
    \end{equation}
}%
We label this method \textsc{exprk3} when no directional splitting 
approximations are used.
In this case, in our experiments the needed linear combinations
of actions of $\varphi$-functions
are computed using the very efficient incomplete orthogonalization Krylov-based 
technique described in Reference~\cite{LPR19}.

When the directional splitting approximations are actually employed in 
scheme~\eqref{eq:exprk3}, we label
the resulting methods \textsc{exprk3ds\_real} (if we use real coefficients) and
\textsc{exprk3ds\_cplx} (if we use complex coefficients). 
More in detail, as set of directional splitting coefficients we 
always employ the ones
corresponding to the choice of the symbol $+$ in
$\alpha_{1,1}$ or $\alpha_{1,\mu}$ (see Tables~\ref{tab:phi1phi22d},
\ref{tab:phi1phi2nd}, and \ref{tab:phi1phi2nd3}). 
The use of a different set of coefficients did not provide qualitatively 
different results in the examples.
The pseudocodes of the
directional splitting schemes \textsc{exprk3ds\_real} and \textsc{exprk3ds\_cplx} 
(assuming a constant time step size) are given 
in~\ref{sec:app} (Algorithms~\ref{alg:exprk3dscplx} 
and~\ref{alg:exprk3dsreal}).
Both schemes require
the computation of small sized matrix $\varphi$-functions of the
matrices $A_\mu$ (once and for all before the actual time integration starts).
This is done in practice by employing a rational Pad\'e approach 
with modified scaling and squaring~\cite{SW09} and, as already mentioned previously,
the computational cost of this phase is negligible (see the timings reported
in Section~\ref{sec:numexp}).
Remark also that in the time integration all the relevant
operations are performed in a tensor fashion, without the need to assemble the
matrix $K$ itself or to compute Kronecker products. In fact, the needed 
approximations of actions of $\varphi$-functions
(i.e., application to a vector of formulas~\eqref{eq:split2d}, 
\eqref{eq:splitnd}, and \eqref{eq:splitnd3}) are realized in tensor form
by employing Tucker operators (i.e., exploiting equivalence~\eqref{eq:krontomu}).
Notice that even the action of the matrix $K$ on a vector 
(needed to evaluate $\bb f(t_n,\bb u_n)$)
is realized in tensor form thanks to the equivalence (see~\cite[formula~(9)]{CCZ23})
\begin{equation}\label{eq:kronsumv}
  K\bb t = (A_d \oplus \cdots \oplus A_1) \bb t = 
  \mathrm{vec}\left( \sum_{\mu=1}^d(\bb T \times_\mu A_\mu)\right),
  \quad \mathrm{vec}(\bb T) = \bb t.
\end{equation}
In practice, in our numerical experiments
we compute the Tucker operator and the action of $K$ by exploiting
the high efficiency of level 3 BLAS, as thoroughly explained in 
References~\cite{CCEOZ22,CCZ23}
and briefly summarized after formula~\eqref{eq:expTucker}.
More specifically, when performing experiments in MATLAB environment,
we employ the functions \texttt{tucker} and \texttt{kronsumv} 
from the package KronPACK~\cite{CCZ23}, which directly exploit the multithreaded
MATLAB routines to perform GEMM.
Concerning the experiments in CPU and GPU 
using the C++ and CUDA languages, 
we directly call level 3 BLAS from 
efficient libraries available on the hardware (Intel MKL~\cite{mkl} and 
cuBLAS~\cite{cublas} , respectively)
to realize the needed tensor operations.
Notice that, in this context, we expect consistent speedups in terms of wall-clock time
by employing GPUs instead of CPUs, since these kinds of operations are very well
implemented on the former (see also the discussions in Reference~\cite{CCEOZ22}).
In addition, the evaluation of the nonlinearity 
$\bb g$ and the (pointwise) summation or multiplication operations, 
for instance, are performed by proper 
CUDA kernels, i.e., exploiting a massive parallelism. This is also an area in
which GPUs greatly outperform the corresponding multithreaded version on CPUs.
As a matter of fact, in our GPU implementations of 
Algorithms~\ref{alg:exprk3dscplx} and~\ref{alg:exprk3dsreal}, the only communication
with the CPU is for the solution of the linear systems in the Pad\'e approximation
of the $\varphi$-functions, which was faster if performed on the CPU. This is
expected, since it is a small sized task for which GPUs are not highly optimized.
Apart from that, all the remaining code is executed directly on the GPU.

Finally, remark that to perform a single integration step with
Algorithms~\ref{alg:exprk3dscplx} and~\ref{alg:exprk3dsreal}, we need 
an action of Kronecker sum and 10 and 15 Tucker operators, respectively.
The cost of evaluating Tucker operators, independently of the considered 
hardware, is much lower compared to that of
computing actions of exponentials and/or linear combinations of actions of 
$\varphi$-functions without directional splitting
(as needed by the \textsc{exprk3} integrator~\eqref{eq:exprk3}).
This has already been observed and discussed in full details, for instance, in 
References~\cite{CCEOZ22,CCZ23phi}.
Therefore, we may plausibly expect that the proposed integrators 
\textsc{exprk3ds\_cplx} and \textsc{exprk3ds\_real} will perform well
(compared to \textsc{exprk3}) in terms of simulation wall-clock time,
even if in principle they may introduce a directional splitting error
(see the next section for specific results on the performed numerical experiments).
}
\section{Numerical examples}\label{sec:numexp}
In this section, we present an application of the proposed approximations for 
efficiently solving two popular systems of PDEs, namely the two-component
2D Schnakenberg and 3D FitzHugh--Nagumo models{\color{black}, using
exponential integrators}.
Such models are {\color{black}important} in the context of 
biochemical reactions and electric current flows, since they lead to the formation
of the so-called Turing Patterns~\cite{GLRS19,AMS23,MPV08}.
For both examples, we first perform a semidiscretization in space using
second order uniform centered finite differences, with $n_\mu=n$
discretization points per direction, encapsulating the boundary conditions
{\color{black}(homogeneous Neumann, in fact)} directly
in the relevant matrices. This leads to a system of ODEs in the form
\begin{equation}\label{eq:twocompdisc}
  \begin{bmatrix}
    \bb u'(t) \\ \bb v'(t)
  \end{bmatrix} =
  \begin{bmatrix}
    K_1 & 0\\0 & K_2
  \end{bmatrix}
  \begin{bmatrix}
    \bb u(t) \\ \bb v(t)
  \end{bmatrix} +
  \begin{bmatrix}
    \bb g^1(t,\bb u(t),\bb v(t)) \\ \bb g^2(t,\bb u(t),\bb v(t))
  \end{bmatrix}
  {\color{black}=
  \begin{bmatrix}
    \bb f^1(t,\bb u(t),\bb v(t)) \\ \bb f^2(t,\bb u(t),\bb v(t))
  \end{bmatrix}},
\end{equation}
where {\color{black}$\bb u(t)$ and $\bb v(t)$ represent the two components of
the system of PDEs, while} $K_1$ and $K_2$ are matrices having Kronecker sum structure.
{\color{black}Since the matrix of system~\eqref{eq:twocompdisc} is block
diagonal,} the needed actions of matrix
$\varphi$-functions can be 
performed separately for $K_1$ and $K_2$ using the techniques presented previously
(see also Reference~\cite{CC24}).

For both examples, we perform {\color{black}several experiments which are
briefly described in the following.}
\begin{itemize}
  \item First of all, we test the accuracy of the third order directional split
        integrators \textsc{exprk3ds\_real} and \textsc{exprk3ds\_cplx}, and measure the performances against other methods
        in MATLAB language (using the software MathWorks
        MALTAB\textsuperscript{\textregistered} R2022a). To this aim, we fix the
        number of spatial discretization points, the final simulation time, and 
        we let vary the number of time
        steps. As terms of comparison we consider the second order directional split integrator
        proposed in Reference~\cite{CC24} (that we denote \textsc{etd2rkds}), 
        which corresponds to the popular \textsc{etd2rk} scheme (i.e., tableau (5.3) in
        Reference~\cite{HO05} with $c_2=1$) with second
        order directional splitting of the involved $\varphi$-functions. Moreover,
        we present also the results with the \textsc{etd2rk} and the \textsc{exprk3}
        integrators (without directional splitting) where{\color{black}, as
        mentioned in Section~\ref{sec:impl},} the
        actions of $\varphi$-functions are approximated with the incomplete
        orthogonalization Krylov-based technique~\cite{LPR19} (input
        tolerance and incomplete orthogonalization parameter set to
        $1\mathrm{e}{-6}$ and $2$, respectively). The error is always measured
        in the infinity norm with respect to a reference solution computed with
        the \textsc{exprk3ds\_real} method and a sufficiently large number of
        time steps. The hardware used for
        this experiment is a standard laptop equipped with an Intel Core
        {\color{black}i7-10750H} CPU (6 physical cores) and {\color{black}16GB}
        of RAM.
        {\color{black}On the same hardware, 
        we perform a similar simulation using the C++ language to emphasize
        that the proposed procedures are effective also in this framework.
        More in detail,
        we test the accuracy and the performances of the directional split
        exponential integrators \textsc{etd2rkds}, \textsc{exprk3ds\_real},
        and \textsc{exprk3ds\_cplx} in double precision arithmetic.
        The gcc compiler version is 8.4.0, 
        Intel OneAPI MKL library version 2021.4.0 is used for BLAS, 
        and OpenMP is employed for basic parallelization;}

\item   {\color{black} Then,
        we perform a simulation with the proposed third order integrators
        to show that we are able to retrieve the expected Turing pattern.
        To this aim,
        we fix the number of spatial discretization points,
        we set a large final simulation time, and we let vary the 
        number of time integration steps.
        We measure the performances on the same hardware employed
        previously using
        different software architectures, i.e.,
        MATLAB and C++ 
        with double precision arithmetic.
        We also present results using the CUDA language
        (single precision arithmetic, nvcc compiler and CUDA version 10.1, BLAS
        provided by cuBLAS, and CUDA kernels for massive parallelization).
        In this case, the GPU employed is the mobile one provided with the 
        laptop, i.e., an
        nVIDIA GeForce GTX 1650 Ti card (4GB of dedicated memory). Even if this is
        clearly a consumer-level GPU (and, in particular, not suitable 
        for double precision arithmetic), it can still be effectively employed
        for the relatively small simulations under consideration at this stage;
  }
   \item {\color{black}
   Finally, we present some results on professional CPU and GPU hardware. 
         Once again, we first test the accuracy and the performances of \textsc{etd2rkds}, 
         \textsc{exprk3ds\_real}, and \textsc{exprk3ds\_cplx}
         by fixing the
         number of spatial discretization points, the final simulation time, and 
         by letting vary the number of time steps. Double precision arithmetic
         is employed both for the CPU and the GPU.
         Then, we perform a simulation using the proposed third order integrators 
         for an increasing number of spatial discretization points 
         (i.e., we increase the number of degrees of freedom). In this
         case, we employ double precision arithmetic for the CPU results and
         both double and single precision arithmetic for the GPU ones.
         The CPU hardware is a dual socket Intel Xeon Gold 5118 with
         $2\times 12$ cores, while the GPU is a single nVIDIA V100
         card (equipped with 16 GB of RAM). When calling BLAS on the CPU,
         we use the Intel OneAPI MKL library version 2020.1.0, while on the 
         GPU we employ cuBLAS from CUDA 11.2.
         For the parallelization we again employ
         OpenMP and CUDA kernels for the CPU and the GPU, respectively.}
\end{itemize}
{\color{black} We remark that, in all the examples presented here,
the \textsc{exprk3ds\_cplx} integrator is outperformed by 
the \textsc{exprk3ds\_real} scheme. Nevertheless, we decided to report also
the results of the former for all the experiments. In fact, the 
complex method could still be convenient
in other instances, such as the integration of complex-valued PDEs. We
believe that having an idea of the overall computational cost may then be of 
scientific interest.}

The MATLAB code to reproduce the examples (fully compatible with GNU Octave)
is publicly available in a GitHub 
repository\footnote{Available at \url{https://github.com/cassinif/Expds3}.}.

\subsection{Two-dimensional Schnakenberg model}\label{sec:Schnakenberg_2d}
We consider the following Schnakenberg model in two space dimensions
(see References~\cite{AMS23,DASS20,CC24})
\begin{equation}\label{eq:Schnakenberg_2d}
  \left\{
  \begin{aligned}
    \partial_t u&=
    \delta^u\Delta u+\rho(a^u-u+u^2v),\\
    \partial_t v&=
    \delta^v \Delta v+\rho (a^v-u^2v),
  \end{aligned}\right.
\end{equation}
defined in the spatial domain $\Omega=(0,1)^2$, with homogeneous Neumann
boundary conditions.
The parameters are set to
$\delta^u=1$, $\delta^v=10$, $\rho=1000$, $a^u=0.1$, and $a^v=0.9$
so that the equilibrium $(u_\rme,v_\rme)=(a^u+a^v,a^v/(a^u+a^v)^2)$
is susceptible
of Turing instability. The initial data are
$u_0=u_\rme+10^{-5}\cdot\mathcal{U}(0,1)$ and
$v_0=v_\rme+10^{-5}\cdot\mathcal{U}(0,1)$, where 
$\mathcal{U}(0,1)$ denotes the uniformly distributed random variable
in $(0,1)$. 

We {\color{black}start by performing simulations in MATLAB language} with the spatial domain
discretized by $n=150$ point per direction, i.e., the total number of degrees
of freedom is $N=2 \cdot 150^2$. The final time is set to $T=0.25$.
The number of time steps ranges from 3000 to 6000 for the second order integrators,
while from 1000 to 2500 for the third order ones. The results are collected in
an error decay plot and in a work-precision diagram in Figure~\ref{fig:schnak2d}.
\begin{figure}[htb!]
  \centering
%
%
%
\begin{tikzpicture}

\begin{axis}[%
width=1.8in,
height=1.8in,
scale only axis,
xmode=log,
xmin=800,
xmax=11000,
xminorticks=true,
xlabel style={font=\color{white!15!black}},
xlabel={Number of time steps},
xtick={1200,3000,7200},
xticklabels={1200,3000,7200},
ymode=log,
ymin=1e-06,
ymax=0.00654314198376849,
yminorticks=true,
ylabel style={font=\color{white!15!black}},
ylabel={Error in the infinity norm},
axis background/.style={fill=white},
legend style={at={(0.98,0.49)}, font=\scriptsize,legend cell align=left, align=left, draw=white!15!black}
]
\addplot [color=green, mark=o, only marks, line width=1pt, mark size = 2.5pt, mark options={solid, green}]
  table[row sep=crcr]{%
3000	4.085037044649316e-03\\
4000	2.335164356336610e-03\\
5000	1.509254859276429e-03\\
6000	1.054866378797149e-03\\
};
\addlegendentry{\textsc{etd2rkds}}

\addplot [color=magenta, mark=diamond, only marks, line width=1pt, mark size = 2.5pt, mark options={solid, magenta}]
  table[row sep=crcr]{%
1000	7.368519354122435e-04\\
1500	2.252011622061814e-04\\
2000	9.593280565580875e-05\\
2500	4.886477837954512e-05\\
};
\addlegendentry{\textsc{exprk3ds\_cplx}}

\addplot [color=cyan, mark=+, only marks, line width=1pt, mark size = 2.5pt, mark options={solid, cyan}]
  table[row sep=crcr]{%
1000	6.383074593443831e-04\\
1500	1.771262449305758e-04\\
2000	7.056477185097450e-05\\
2500	3.397029178243606e-05\\
};
\addlegendentry{\textsc{exprk3ds\_real}}

\addplot [color=red, mark=x, only marks, line width=1pt, mark size = 2.5pt, mark options={solid, red}]
  table[row sep=crcr]{%
3000	3.950993661327285e-03\\
4000	2.272217532763046e-03\\
5000	1.485171585082122e-03\\
6000	1.047674456612400e-03\\
};
\addlegendentry{\textsc{etd2rk}}

\addplot [color=blue, mark=square, only marks, line width=1pt, mark size = 2.5pt, mark options={solid, blue}]
  table[row sep=crcr]{%
1000	1.517866731636416e-04\\
1500	4.287852153680053e-05\\
2000	1.603833297829543e-05\\
2500	6.723653382408810e-06\\
};
\addlegendentry{\textsc{exprk3}}

\addplot [color=black, dashed, forget plot]
  table[row sep=crcr]{%
3000	1.271570991884245e-03\\
6000	3.178927479710610e-04\\
};
\addplot [color=black, forget plot]
  table[row sep=crcr]{%
1000	3.199148154035090e-05\\
2500	2.04745481858246e-06\\
};
\end{axis}

\end{tikzpicture}
%
%
%
\begin{tikzpicture}

\begin{axis}[%
width=1.8in,
height=1.8in,
scale only axis,
xmode=log,
xmin=2,
xmax=200,
xminorticks=true,
xlabel style={font=\color{white!15!black}},
xlabel={Wall-clock time (s)},
xtick={5,20,100},
xticklabels={5,20,100},
ymode=log,
ymin=1e-06,
ymax=0.00654314198376849,
yminorticks=true,
ylabel style={font=\color{white!15!black}},
ylabel={Error in the infinity norm},
axis background/.style={fill=white},
]

\addplot [color=green, mark=o, only marks, line width=1pt, mark size = 2.5pt, mark options={solid, green}]
  table[row sep=crcr]{%
4.025579	4.085037044649316e-03\\
5.597067	2.335164356336610e-03\\
6.848831	1.509254859276429e-03\\
8.200852	1.054866378797149e-03\\
};

\addplot [color=magenta, mark=diamond, only marks, line width=1pt, mark size = 2.5pt, mark options={solid, magenta}]
  table[row sep=crcr]{%
15.196981	7.368519354122435e-04\\
20.123390	2.252011622061814e-04\\
26.831432	9.593280565580875e-05\\
34.754057	4.886477837954512e-05\\
};

\addplot [color=cyan, mark=+, only marks, line width=1pt, mark size = 2.5pt, mark options={solid, cyan}]
  table[row sep=crcr]{%
5.558783	6.383074593443831e-04\\
8.060007	1.771262449305758e-04\\
10.76247	7.056477185097450e-05\\
13.367402	3.397029178243606e-05\\
};

\addplot [color=red, mark=x, only marks, line width=1pt, mark size = 2.5pt, mark options={solid, red}]
  table[row sep=crcr]{%
49.419626	3.950993661327285e-03\\
60.105015	2.272217532763046e-03\\
70.584855	1.485171585082122e-03\\
75.652296	1.047674456612400e-03\\
};

\addplot [color=blue, mark=square, only marks, line width=1pt, mark size = 2.5pt, mark options={solid, blue}]
  table[row sep=crcr]{%
59.115793	1.517866731636416e-04\\
68.871311	4.287852153680053e-05\\
76.803278	1.603833297829543e-05\\
83.328584	6.723653382408810e-06\\
};

\end{axis}

\end{tikzpicture}%
  \caption{Results {\color{black}in MATLAB language (standard laptop)} for the simulation of 2D Schnakenberg
  model~\eqref{eq:Schnakenberg_2d} with
  $n=150$ spatial discretization points per direction up to final time $T=0.25$.
  {\color{black}Top} plot: error decay, with reference slope lines of order two (dashed) and 
  three (solid). {\color{black}Bottom} plot: work-precision diagram.}
  \label{fig:schnak2d}
\end{figure}
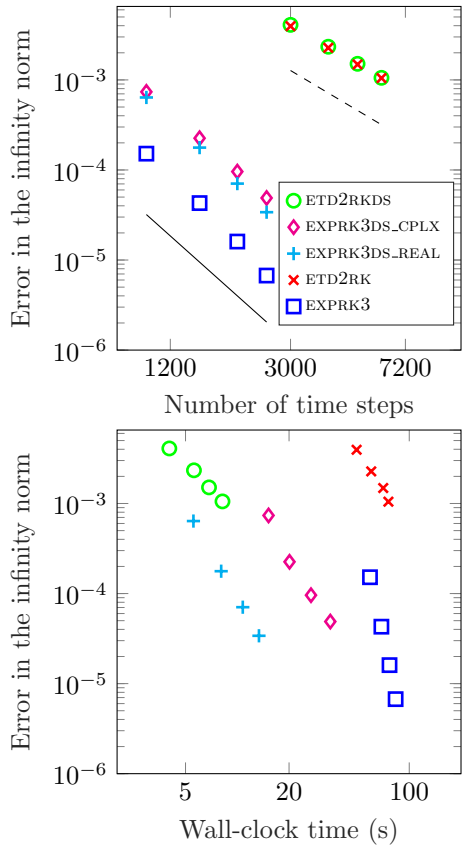
First of all, notice that all the integrators show the expected order of 
convergence. Moreover, we observe that {\color{black}the third order integrators
\textsc{exprk3ds\_cplx} and \textsc{exprk3ds\_real} have larger errors with 
respect to the classical \textsc{exprk3} method, plausibly because of
the introduced directional splitting approximation}.
Nevertheless, the gain in efficiency in terms of wall-clock time is
neat (see {\color{black}right} plot of Figure~\ref{fig:schnak2d}).
In fact, the work-precision diagram is separated into two parts.
On the right hand side we have the most expensive integrators, i.e.,
\textsc{etd2rk} and \textsc{exprk3} implemented with a general-purpose technique 
for computing actions of $\varphi$-functions. On the left hand side we have the
directional split integrators, which always perform better with respect their
original counterparts. Concerning more in detail the third order 
directional split schemes, we observe that \textsc{exprk3ds\_cplx} and 
\textsc{exprk3ds\_real} do not show a considerable difference in terms of 
achieved accuracy. Hence, as expected, the one that employs just real arithmetic is 
cheaper. Overall, for stringent accuracies, the best performant 
integrator is \textsc{exprk3ds\_real}, and in particular it outperforms the
\textsc{etd2rkds} scheme already available in the literature.
{\color{black}We then perform a similar simulation using the C++ language.
We report the results of the directional split integrators
\textsc{etd2rkds}, \textsc{exprk3ds\_real}, and \textsc{exprk3ds\_cplx} in
Figure~\ref{fig:schnak2dCPPmy}.}
\begin{figure}[htb!]
  \centering
%
%
%
\begin{tikzpicture}

\begin{axis}[%
width=1.8in,
height=1.8in,
scale only axis,
xmode=log,
xmin=800,
xmax=11000,
xminorticks=true,
xlabel style={font=\color{white!15!black}},
xlabel={Number of time steps},
xtick={1200,3000,7200},
xticklabels={1200,3000,7200},
ymode=log,
ymin=1e-06,
ymax=0.00654314198376849,
yminorticks=true,
ylabel style={font=\color{white!15!black}},
ylabel={Error in the infinity norm},
axis background/.style={fill=white},
legend style={at={(0.98,0.3)}, font=\scriptsize,legend cell align=left, align=left, draw=white!15!black}
]
\addplot [color=green, mark=o, only marks, line width=1pt, mark size = 2.5pt, mark options={solid, green}]
  table[row sep=crcr]{%
3000	4.085037188610037e-03\\
4000	2.335164260936092e-03\\
5000	1.509254926105934e-03\\
6000	1.054866269470745e-03\\
};
\addlegendentry{\textsc{etd2rkds}}

\addplot [color=magenta, mark=diamond, only marks, line width=1pt, mark size = 2.5pt, mark options={solid, magenta}]
  table[row sep=crcr]{%
1000	7.368521863820337e-04\\
1500	2.252012275530215e-04\\
2000	9.593296351328901e-05\\
2500	4.886469284183583e-05\\
};
\addlegendentry{\textsc{exprk3ds\_cplx}}

\addplot [color=cyan, mark=+, only marks, line width=1pt, mark size = 2.5pt, mark options={solid, cyan}]
  table[row sep=crcr]{%
1000	6.383071828199526e-04\\
1500	1.771260110925291e-04\\
2000	7.056496437208105e-05\\
2500	3.397073242555165e-05\\
};
\addlegendentry{\textsc{exprk3ds\_real}}

\addplot [color=black, dashed, forget plot]
  table[row sep=crcr]{%
3000	0.001873\\
6000	0.0004749\\
};
\addplot [color=black, forget plot]
  table[row sep=crcr]{%
1000	0.000128\\
2500	8.233e-06\\
};
\end{axis}

\end{tikzpicture}
%
%
%
\begin{tikzpicture}

\begin{axis}[%
width=1.8in,
height=1.8in,
scale only axis,
xmode=log,
xmin=1,
xmax=30,
xminorticks=true,
xlabel style={font=\color{white!15!black}},
xlabel={Wall-clock time (s)},
xtick={2,6,18},
xticklabels={2,6,18},
ymode=log,
ymin=1e-06,
ymax=0.00654314198376849,
yminorticks=true,
ylabel style={font=\color{white!15!black}},
ylabel={Error in the infinity norm},
axis background/.style={fill=white},
]

\addplot [color=green, mark=o, only marks, line width=1pt, mark size = 2.5pt, mark options={solid, green}]
  table[row sep=crcr]{%
2.018584077	4.085037188610037e-03\\
2.688800984	2.335164260936092e-03\\
3.659313807	1.509254926105934e-03\\
4.319558642	1.054866269470745e-03\\
};

\addplot [color=magenta, mark=diamond, only marks, line width=1pt, mark size = 2.5pt, mark options={solid, magenta}]
  table[row sep=crcr]{%
7.146398075	7.368521863820337e-04\\
10.802290706	2.252012275530215e-04\\
15.022680035	9.593296351328901e-05\\
19.18399331	4.886469284183583e-05\\
};

\addplot [color=cyan, mark=+, only marks, line width=1pt, mark size = 2.5pt, mark options={solid, cyan}]
  table[row sep=crcr]{%
3.054091073	6.383071828199526e-04\\
4.355233175	1.771260110925291e-04\\
5.188022116	7.056496437208105e-05\\
6.354797426	3.397073242555165e-05\\
};

\end{axis}

\end{tikzpicture}%
  \caption{Results {\color{black}in C++ language (standard laptop)} for the simulation of 2D Schnakenberg
  model~\eqref{eq:Schnakenberg_2d} with
  $n=150$ spatial discretization points per direction up to final time $T=0.25$.
  {\color{black}Top} plot: error decay, with reference slope lines of order two (dashed) and 
  three (solid). {\color{black}Bottom} plot: work-precision diagram.}
  \label{fig:schnak2dCPPmy}
\end{figure}
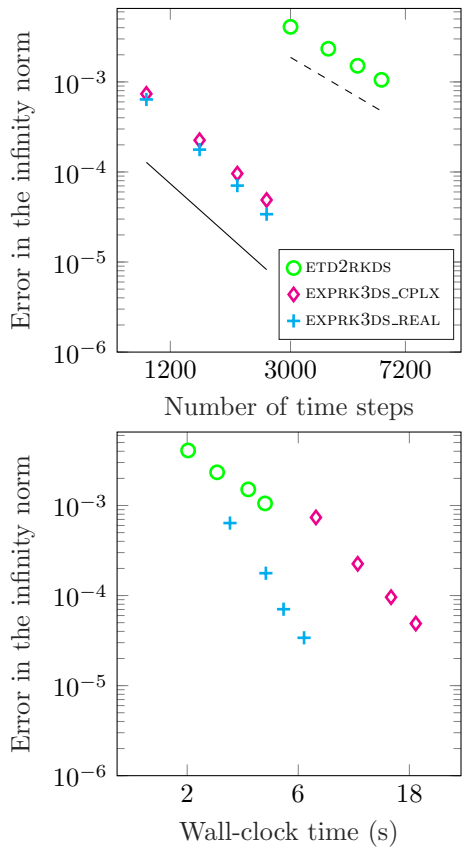
{\color{black}The conclusions are essentially the same
as drawn from the MATLAB experiment, i.e., the integrator that performs best
is \textsc{exprk3ds\_real}.}%

{\color{black} Then,} we compare on different software architectures 
the results of the third order integrators
\textsc{exprk3ds\_real} and {\color{black}\textsc{exprk3ds\_cplx}, i.e.,
the ones that employ the proposed approximations,} to
achieve the expected stationary pattern (a cos-like structure with modes
$(3,5), (5,3)$~\cite{DASS20}).
To this aim, we set the final simulation time to $T=2$ and we let vary the number of 
time steps, while the space discretization is the same as for the previous
experiments (i.e., $n=150$ points per direction).
A representative of the obtained pattern in the $u$ component is shown in 
Figure~\ref{fig:schnak2d_patt}, and the results of the experiment are
collected in Table~\ref{tab:schnak2d_patt_laptop}.
\begin{figure}[htb!]
  \centering
  \input{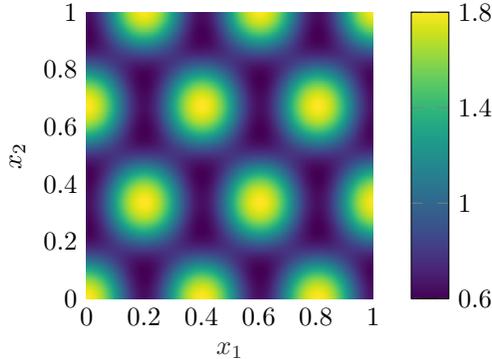}
  \caption{Turing pattern ($u$ component)
    for 2D Schnakenberg model~\eqref{eq:Schnakenberg_2d}
    obtained at final time
    $T=2$ with $n=150$ spatial discretization points per direction.}
  \label{fig:schnak2d_patt}
\end{figure}
\begin{table}[htb!]
  \caption{Wall-clock time (in seconds) for the simulation
  of 2D Schnakenberg
  model~\eqref{eq:Schnakenberg_2d} up to $T=2$ with $n=150$
  spatial discretization points per direction (i.e., $N=2\cdot150^2$ degrees
  of freedom).
  {\color{black}The time integrators are \textsc{exprk3ds\_real} (top)
  and \textsc{exprk3ds\_cplx} (bottom)},
  with varying number of time steps, using different software architectures
  {\color{black}(standard laptop)}. 
  In brackets we report the portion of time needed for
  computing the $\varphi$-functions.
  }
  \label{tab:schnak2d_patt_laptop}
  \centering
  \begin{tabular}{c|c|c|c}
  Number of steps & MATLAB \textit{double} & {\color{black}C++} \textit{double} & {\color{black}CUDA} \textit{single} \\
  \hline
  2000 & 10.03 (0.26) & 4.38 (0.21) & 0.93 (0.05)\\
  4000 & 18.87 (0.25) & 8.63 (0.20) & 1.76 (0.06)\\
  6000 & 28.69 (0.27) & 13.09 (0.21) & 2.61 (0.06) \\
  \end{tabular}\\\vspace{5pt}
  \begin{tabular}{c|c|c|c}
  Number of steps & MATLAB \textit{double} & {\color{black}C++} \textit{double} & {\color{black}CUDA} \textit{single} \\
  \hline
  2000 & 23.61 (0.74) & 12.98 (0.61) & 2.41 (0.12)\\
  4000 & 46.11 (0.75) & 26.12 (0.60) & 4.77 (0.11)\\
  6000 & 70.89 (0.77) & 39.20 (0.62) & 6.84 (0.11) \\
  \end{tabular}
\end{table}
Notice that, as expected, the wall-clock time of all the simulations is 
proportional to the number of time steps. In fact, after the computation of 
the small sized $\varphi$-functions, the {\color{black}integrators are}
direct and {\color{black}their} total 
computational burden could be easily predicted by performing a single
time step, similarly to what stated for the \textsc{etd2rkds} scheme
in Reference~\cite{CC24} . Also, remark that the wall-clock time
needed to compute the $\varphi$-functions is negligible compared to that of the
time marching of the methods and, {\color{black}again as expected, 
the \textsc{exprk3ds\_cplx} integrator is computationally more expensive than
\textsc{exprk3ds\_real}}.
We observe that the employment in this context of GPUs is really
effective. In fact, even if we are using a basic consumer-level GPU card,
we can still obtain the
Turing pattern in a short amount of time.

We proceed by presenting some performance results 
doing simulations with professional hardware (see beginning of
Section~\ref{sec:numexp} for the details). 
{\color{black}Similarly to what performed previously, we first test the 
directional split integrators on problem~\eqref{eq:Schnakenberg_2d} with 
the spatial domain
discretized by $n=300$ point per direction, i.e., the total number of degrees
of freedom is $N=2 \cdot 300^2$. The final time is set to $T=0.25$.
The number of time steps ranges from 3000 to 6000 for the second order integrators,
while from 1000 to 2500 for the third order ones. For the computations, we
use double precision arithmetic. The outcome is collected in
the error decay plot and in the work-precision diagram in Figure~\ref{fig:schnak2dser}.}
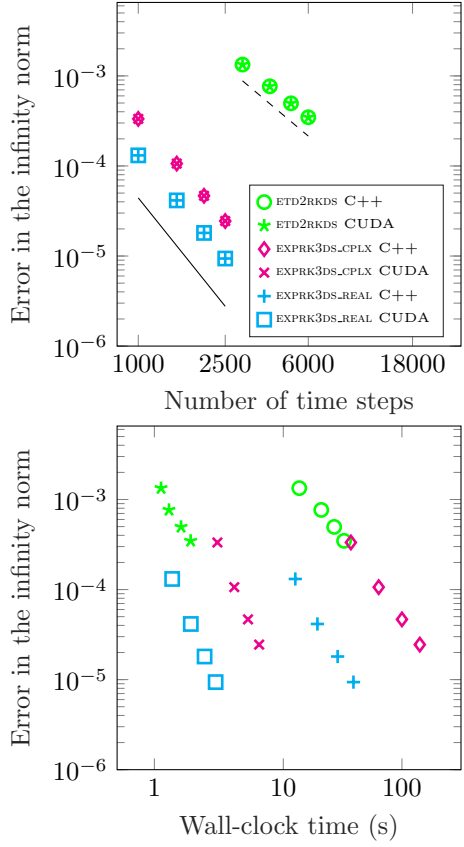
\begin{figure}[htb!]
  \centering
%
%
%
\begin{tikzpicture}

\begin{axis}[%
width=1.8in,
height=1.8in,
scale only axis,
xmode=log,
xmin=800,
xmax=30000,
xminorticks=true,
xlabel style={font=\color{white!15!black}},
xlabel={Number of time steps},
xtick={1000,2500,6000,18000},
xticklabels={1000,2500,6000,18000},
ymode=log,
ymin=1e-06,
ymax=0.00654314198376849,
yminorticks=true,
ylabel style={font=\color{white!15!black}},
ylabel={Error in the infinity norm},
axis background/.style={fill=white},
legend style={at={(0.95,0.47)}, font=\tiny,legend cell align=left, align=left, draw=white!15!black},
legend columns = 1
]
\addplot [color=green, mark=o, only marks, line width=1pt, mark size = 2.5pt, mark options={solid, green}]
  table[row sep=crcr]{%
3000	1.33652105989151e-03\\
4000	7.65949225654664e-04\\
5000	4.96038759644207e-04\\
6000	3.47289405420195e-04\\
};
\addlegendentry{\textsc{etd2rkds} C++}

\addplot [color=green, mark=star, only marks, line width=1pt, mark size = 2.5pt, mark options={solid, green}]
  table[row sep=crcr]{%
3000	1.336522460823274e-03\\
4000	7.659493939373822e-04\\
5000	4.960374417990927e-04\\
6000	3.472884458301285e-04\\
};
\addlegendentry{\textsc{etd2rkds} CUDA}

\addplot [color=magenta, mark=diamond, only marks, line width=1pt, mark size = 2.5pt, mark options={solid, magenta}]
  table[row sep=crcr]{%
1000	3.333224198773761e-04\\
1500	1.063021743437387e-04\\
2000	4.664404335758265e-05\\
2500	2.447341315951764e-05\\
};
\addlegendentry{\textsc{exprk3ds\_cplx} C++}

\addplot [color=magenta, mark=x, only marks, line width=1pt, mark size = 2.5pt, mark options={solid, magenta}]
  table[row sep=crcr]{%
1000	3.333220818713825e-04\\
1500	1.063024434290332e-04\\
2000	4.664404708643686e-05\\
2500	2.447362287140412e-05\\
};
\addlegendentry{\textsc{exprk3ds\_cplx} CUDA}

\addplot [color=cyan, mark=+, only marks, line width=1pt, mark size = 2.5pt, mark options={solid, cyan}]
  table[row sep=crcr]{%
1000	1.311048510188045e-04\\
1500	4.146927621455084e-05\\
2000	1.805537857015891e-05\\
2500	9.382364029157744e-06\\
};
\addlegendentry{\textsc{exprk3ds\_real} C++}

\addplot [color=cyan, mark=square, only marks, line width=1pt, mark size = 2.5pt, mark options={solid, cyan}]
  table[row sep=crcr]{%
1000	1.31105813653203e-04\\
1500	4.146882334351283e-05\\
2000	1.805446440946511e-05\\
2500	9.381644864067653e-06\\
};
\addlegendentry{\textsc{exprk3ds\_real} CUDA}

\addplot [color=black, dashed, forget plot]
  table[row sep=crcr]{%
3000	0.0008818\\
6000	0.00021472\\
};
\addplot [color=black, forget plot]
  table[row sep=crcr]{%
1000	4.4192e-05\\
2500	2.7651e-06\\
};
\end{axis}

\end{tikzpicture}
%
%
%
\begin{tikzpicture}

\begin{axis}[%
width=1.8in,
height=1.8in,
scale only axis,
xmode=log,
xmin=0.5,
xmax=300,
xminorticks=true,
xlabel style={font=\color{white!15!black}},
xlabel={Wall-clock time (s)},
xtick={1,11,100},
xticklabels={1,10,100},
ymode=log,
ymin=1e-06,
ymax=0.00654314198376849,
yminorticks=true,
ylabel style={font=\color{white!15!black}},
ylabel={Error in the infinity norm},
axis background/.style={fill=white},
]
\addplot [color=green, mark=o, only marks, line width=1pt, mark size = 2.5pt, mark options={solid, green}]
  table[row sep=crcr]{%
14.82023153	1.33652105989151e-03\\
22.307618726	7.65949225654664e-04\\
28.358623539	4.96038759644207e-04\\
34.070224114	3.47289405420195e-04\\
};

\addplot [color=green, mark=star, only marks, line width=1pt, mark size = 2.5pt, mark options={solid, green}]
  table[row sep=crcr]{%
1.13278625	1.336522460823274e-03\\
1.312132125	7.659493939373822e-04\\
1.643346038	4.960374417990927e-04\\
1.963767974	3.472884458301285e-04\\
};

\addplot [color=magenta, mark=diamond, only marks, line width=1pt, mark size = 2.5pt, mark options={solid, magenta}]
  table[row sep=crcr]{%
38.699848261	3.333224198773761e-04\\
64.923434723	1.063021743437387e-04\\
100.051603871	4.664404335758265e-05\\
139.428945927	2.447341315951764e-05\\
};

\addplot [color=magenta, mark=x, only marks, line width=1pt, mark size = 2.5pt, mark options={solid, magenta}]
  table[row sep=crcr]{%
3.229745274	3.333220818713825e-04\\
4.42176053	1.063024434290332e-04\\
5.707338119	4.664404708643686e-05\\
7.023962272	2.447362287140412e-05\\
};

\addplot [color=cyan, mark=+, only marks, line width=1pt, mark size = 2.5pt, mark options={solid, cyan}]
  table[row sep=crcr]{%
13.759141612	1.311048510188045e-04\\
20.761862037	4.146927621455084e-05\\
30.238310238	1.805537857015891e-05\\
40.58218563	9.382364029157744e-06\\
};

\addplot [color=cyan, mark=square, only marks, line width=1pt, mark size = 2.5pt, mark options={solid, cyan}]
  table[row sep=crcr]{%
1.392616729	1.31105813653203e-04\\
1.970017111	4.146882334351283e-05\\
2.545179511	1.805446440946511e-05\\
3.125902624	9.381644864067653e-06\\
};

\end{axis}

\end{tikzpicture}%
  \caption{Results {\color{black}in C++ and CUDA languages (professional hardware)} 
  for the simulation of 2D Schnakenberg
  model~\eqref{eq:Schnakenberg_2d} with
  $n=300$ spatial discretization points per direction up to final time $T=0.25$.
  {\color{black}Top} plot: error decay, with reference slope lines of order two (dashed) and 
  three (solid). {\color{black}Bottom} plot: work-precision diagram.}
  \label{fig:schnak2dser}
\end{figure}
{\color{black}As we can see, the results are coherent with what observed 
in the previous experiments using a standard laptop. In particular, all the
integrators have the expected order of convergence, and the specific hardware
employed (CPU or GPU) does not influence the resulting error. On the other hand,
it is clear the effectiveness of the usage of GPUs in this context. In fact, we observe a
speedup of roughly a factor of 10 in favour of GPUs, and the integrator which
performs best \textsc{exprk3ds\_real}.
To conclude, we perform a simulation 
with increasing number of 
degrees of freedom $N$ using the third order 
directional split integrators \textsc{exprk3ds\_real} and \textsc{exprk3ds\_cplx}.
We set the final time to $T=2$, while the number of
time steps is fixed to $6000$.
The results are summarized in Table~\ref{tab:schnak2d_patt_server}.}
\begin{table}[htb!]
  \caption{Wall-clock time (in seconds) for the simulation 
  of 2D Schnakenberg
  model~\eqref{eq:Schnakenberg_2d} up to $T=2$ with increasing number of
  degrees of freedom $N${\color{black}, 6000 time steps,} and different software architectures
  {\color{black} (professional hardware)}. The time 
  {\color{black}integrators are \textsc{exprk3ds\_real} (top)
  and \textsc{exprk3ds\_cplx} (bottom)}.
  In brackets we report the portion of time needed for computing the
  $\varphi$-functions.}
  \label{tab:schnak2d_patt_server}
  \centering
  \begin{tabular}{c|c|c|c}
  Number of d.o.f. $N$& {\color{black}C++} \textit{double} & {\color{black}CUDA} \textit{double} & {\color{black}CUDA} \textit{single} \\
  \hline
  $2\cdot300^2$ & 73.11 (1.85) & 7.32 (0.31)& 4.12 (0.26)\\
  $2\cdot450^2$ & 196.52 (4.72) & 16.39 (0.54) & 7.44 (0.35)\\
  $2\cdot600^2$ & 440.14 (8.73) & 26.15 (0.83) & 13.17 (0.51)\\
  \end{tabular}\\\vspace{5pt}
  \begin{tabular}{c|c|c|c}
  Number of d.o.f. $N$& {\color{black}C++} \textit{double} & {\color{black}CUDA} \textit{double} & {\color{black}CUDA} \textit{single} \\
  \hline
  $2\cdot300^2$ & 192.14 (4.64) & 16.33 (0.52)& 7.90 (0.31)\\
  $2\cdot450^2$ & 607.79 (13.06) & 46.28 (1.01) & 23.91 (0.64)\\
  $2\cdot600^2$ & 1663.05 (23.32) & 89.93 (1.73) & 42.37 (0.98)\\
  \end{tabular}
\end{table}
{\color{black}Again, the integrator employing complex-valued directional splitting
coefficients is more costly than the one with real-valued.}
Comparing CPU and GPU simulations, the scaling in terms of
computational time is still very good (roughly a factor from 10 to 18 for double
precision). This is expected, since the main computational cost in the time
integration comes from the Tucker operators{\color{black}, and hence scales
favourably in GPU.}

\subsection{Three-dimensional
  FitzHugh--Nagumo model}\label{sec:FitzHughNagumo_3d}
We now consider the FitzHugh--Nagumo model in three space dimensions
\begin{equation}\label{eq:FitzHughNagumo_3d}
  \left\{
  \begin{aligned}
    \partial_t u&=
    \delta^u\Delta u+\rho(-u(u^2-1)-v),\\
    \partial_t v&=
    \delta^v \Delta v+\rho a_1^v(u-a_2^vv),
  \end{aligned}\right.
\end{equation}
defined in the spatial domain $\Omega=(0,\pi)^3$ with homogeneous Neumann
boundary conditions. 
The parameters are $\delta^u=1$, $\delta^v=42.1887$,  $\rho=24.649$, $a_1^v=11$, 
and $a_2^v=0.1$. With this choice, the equilibrium $(u_\rme,v_\rme)=(0,0)$ is
susceptible of Turing instability, and we expect to achieve in the long time 
regime a stationary square pattern with modes $(2,2,2)$ (see also
References~\cite{GLRS19,CC24}). The initial conditions are set to 
$u_0=10^{-3}\cdot\mathcal{U}(0,1)$ and $v_0=10^{-3}\cdot\mathcal{U}(0,1)$,
where as in the previous example $\mathcal{U}(0,1)$ denotes the uniform
random variable in the interval $(0,1)$.

For the first experiment, we discretize the spatial domain with a grid of $n=64$ 
points per direction (total number of degrees of freedom $N=2\cdot 64^3$),
and we simulate up to the final time $T=5$ with different integrators
and a number of time steps ranging from $60000$ to $75000$ for the second order method,
while from $14000$ to $20000$ for the third order ones.
The results are graphically depicted in Figure~\ref{fig:fhn3d}.
\begin{figure}[htb!]
  \centering
%
%
%
\begin{tikzpicture}

\begin{axis}[%
width=1.8in,
height=1.8in,
scale only axis,
xmode=log,
xmin=10000,
xmax=85000,
xminorticks=true,
xlabel style={font=\color{white!15!black}},
xlabel={Number of time steps},
xtick={12000,30000,68000},
xticklabels={12000,30000,68000},
ymode=log,
ymin=1e-05,
ymax=0.001,
yminorticks=true,
ylabel style={font=\color{white!15!black}},
ylabel={Error in the infinity norm},
axis background/.style={fill=white},
legend style={at={(0.45,0.07)}, font=\scriptsize, anchor=south west, legend cell align=left, align=left, draw=white!15!black}
]
\addplot [color=green, mark=o, only marks, line width=1pt, mark size = 2.5pt, mark options={solid, green}]
  table[row sep=crcr]{%
60000	2.253328616585135e-04\\
65000	1.919799740226305e-04\\
70000	1.655145651937596e-04\\
75000	1.441629012672110e-04\\
};
\addlegendentry{\textsc{etd2rkds}}

\addplot [color=magenta, mark=diamond, only marks, line width=1pt, mark size = 2.5pt, mark options={solid, magenta}]
  table[row sep=crcr]{%
14000	9.109767427522430e-05\\
16000	6.034527105270954e-05\\
18000	4.177054921128587e-05\\
20000	2.989602446399220e-05\\
};
\addlegendentry{\textsc{exprk3ds\_cplx}}

\addplot [color=cyan, mark=+, only marks, line width=1pt, mark size = 2.5pt, mark options={solid, cyan}]
  table[row sep=crcr]{%
14000	9.054027169004208e-05\\
16000	5.996967630878248e-05\\
18000	4.150556387181728e-05\\
20000	2.970215271313614e-05\\
};
\addlegendentry{\textsc{exprk3ds\_real}}

\addplot [color=black, dashed, forget plot]
  table[row sep=crcr]{%
60000	1.5643e-04\\
75000	1.0011e-04\\
};
\addplot [color=black, forget plot]
  table[row sep=crcr]{%
14000	5.5510e-05\\
20000	1.9040e-05\\
};
\end{axis}

\end{tikzpicture}
%
%
%
\begin{tikzpicture}

\begin{axis}[%
width=1.8in,
height=1.8in,
scale only axis,
xmode=log,
xmin=450,
xmax=2500,
xminorticks=true,
xlabel style={font=\color{white!15!black}},
xlabel={Wall-clock time (s)},
xtick={600,1100,2000},
xticklabels={600,1100,2000},
ymode=log,
ymin=1e-05,
ymax=0.001,
yminorticks=true,
ylabel style={font=\color{white!15!black}},
ylabel={Error in the infinity norm},
axis background/.style={fill=white},
]

\addplot [color=green, mark=o, only marks, line width=1pt, mark size = 2.5pt, mark options={solid, green}]
  table[row sep=crcr]{%
5.569551670000000e+02	2.253328616585135e-04\\
6.070850799999999e+02	1.919799740226305e-04\\
6.561293780000000e+02	1.655145651937596e-04\\
7.016587700000000e+02	1.441629012672110e-04\\
};

\addplot [color=magenta, mark=diamond, only marks, line width=1pt, mark size = 2.5pt, mark options={solid, magenta}]
  table[row sep=crcr]{%
1.299962099000000e+03	9.109767427522430e-05\\
1.476715010000000e+03	6.034527105270954e-05\\
1.656050954000000e+03	4.177054921128587e-05\\
1.842145262000000e+03	2.989602446399220e-05\\
};

\addplot [color=cyan, mark=+, only marks, line width=1pt, mark size = 2.5pt, mark options={solid, cyan}]
  table[row sep=crcr]{%
5.822326960000000e+02	9.054027169004208e-05\\
6.443096360000000e+02	5.996967630878248e-05\\
7.290110940000000e+02	4.150556387181728e-05\\
8.061083170000001e+02	2.970215271313614e-05\\
};

\end{axis}

\end{tikzpicture}%
  \caption{Results {\color{black} in MATLAB language (standard laptop)} 
  for the simulation of 3D FitzHugh--Nagumo
  model~\eqref{eq:FitzHughNagumo_3d} with
  $n=64$ spatial discretization points per direction up to final time $T=5$.
  {\color{black}Top} plot: error decay, with reference slope lines of order two (dashed) and 
  three (solid). {\color{black}Bottom} plot: work-precision diagram.}
  \label{fig:fhn3d}
\end{figure}
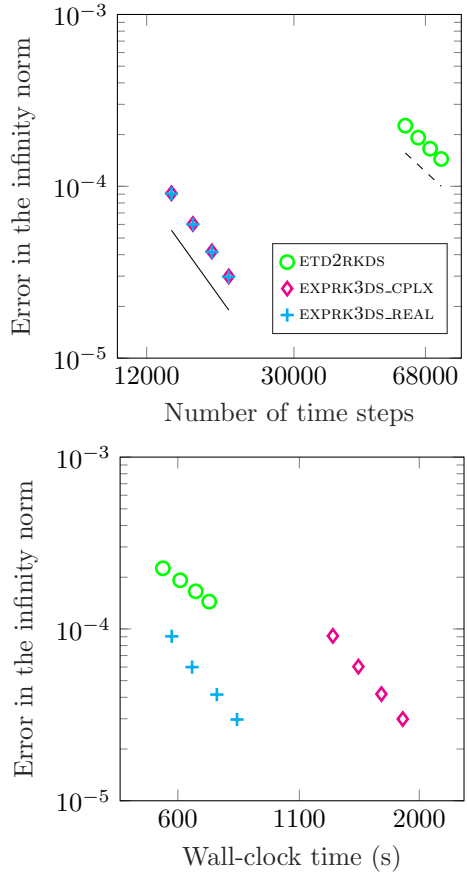
First of all notice that the results with the \textsc{etd2rk} and \textsc{exprk3}
integrators are not reported in the plots, since their simulation wall-clock
time was too large. This is in line with what already observed in the 
two-dimensional example of Section~\ref{sec:Schnakenberg_2d}.
In fact, to obtain comparable accuracies, a simulation with \textsc{etd2rk}
took roughly $6300$ seconds ($60000$ time steps), while \textsc{exprk3}
needed about $6100$ seconds ($14000$ time steps).
Again similarly to the 2D Schnakenberg experiment, the achieved accuracies
of \textsc{exprk3ds\_cplx} and \textsc{exprk3ds\_real} are similar, with an advantage of the
latter in terms of execution time. Overall, for the range of accuracies under
consideration, also in this case the most performant method is \textsc{exprk3ds\_real}.
{\color{black}We repeat the simulations with the directional split exponential
integrators in C++ language (double precision
on a standard laptop). The results, summarized in Figure~\ref{fig:fhn3dCPPmy},
lead essentially to the same conclusions.}
\begin{figure}[htb!]
  \centering
%
%
%
\begin{tikzpicture}

\begin{axis}[%
width=1.8in,
height=1.8in,
scale only axis,
xmode=log,
xmin=10000,
xmax=85000,
xminorticks=true,
xlabel style={font=\color{white!15!black}},
xlabel={Number of time steps},
xtick={12000,30000,68000},
xticklabels={12000,30000,68000},
ymode=log,
ymin=1e-05,
ymax=0.001,
yminorticks=true,
ylabel style={font=\color{white!15!black}},
ylabel={Error in the infinity norm},
axis background/.style={fill=white},
legend style={at={(0.45,0.07)}, font=\scriptsize, anchor=south west, legend cell align=left, align=left, draw=white!15!black}
]
\addplot [color=green, mark=o, only marks, line width=1pt, mark size = 2.5pt, mark options={solid, green}]
  table[row sep=crcr]{%
60000	0.000225332864830423\\
65000	0.0001919799771860494\\
70000	0.0001655145683752388\\
75000	0.0001441629044752932\\
};
\addlegendentry{\textsc{etd2rkds}}

\addplot [color=magenta, mark=diamond, only marks, line width=1pt, mark size = 2.5pt, mark options={solid, magenta}]
  table[row sep=crcr]{%
14000	9.109767243471983e-05\\
16000	6.034526928561642e-05\\
18000	4.177054762281355e-05\\
20000	2.989602208220342e-05\\
};
\addlegendentry{\textsc{exprk3ds\_cplx}}

\addplot [color=cyan, mark=+, only marks, line width=1pt, mark size = 2.5pt, mark options={solid, cyan}]
  table[row sep=crcr]{%
14000	9.054026954254227e-05\\
16000	5.996967523553991e-05\\
18000	4.150556176911248e-05\\
20000	2.970215058667561e-05\\
};
\addlegendentry{\textsc{exprk3ds\_real}}

\addplot [color=black, dashed, forget plot]
  table[row sep=crcr]{%
60000	1.5643e-04\\
75000	1.0011e-04\\
};
\addplot [color=black, forget plot]
  table[row sep=crcr]{%
14000	5.5510e-05\\
20000	1.9040e-05\\
};
\end{axis}

\end{tikzpicture}
%
%
%
\begin{tikzpicture}

\begin{axis}[%
width=1.8in,
height=1.8in,
scale only axis,
xmode=log,
xmin=300,
xmax=1800,
xminorticks=true,
xlabel style={font=\color{white!15!black}},
xlabel={Wall-clock time (s)},
xtick={400,700,1300},
xticklabels={400,700,1300},
ymode=log,
ymin=1e-05,
ymax=0.001,
yminorticks=true,
ylabel style={font=\color{white!15!black}},
ylabel={Error in the infinity norm},
axis background/.style={fill=white},
]

\addplot [color=green, mark=o, only marks, line width=1pt, mark size = 2.5pt, mark options={solid, green}]
  table[row sep=crcr]{%
460.124832104	0.000225332864830423\\
495.610412575	0.0001919799771860494\\
530.03421668	0.0001655145683752388\\
570.028208723	0.0001441629044752932\\
};

\addplot [color=magenta, mark=diamond, only marks, line width=1pt, mark size = 2.5pt, mark options={solid, magenta}]
  table[row sep=crcr]{%
1030.328105073	9.109767243471983e-05\\
1160.983269914	6.034526928561642e-05\\
1302.577058282	4.177054762281355e-05\\
1448.733082561	2.989602208220342e-05\\
};

\addplot [color=cyan, mark=+, only marks, line width=1pt, mark size = 2.5pt, mark options={solid, cyan}]
  table[row sep=crcr]{%
444.863418024	9.054026954254227e-05\\
510.668947369	5.996967523553991e-05\\
577.2066882189999	4.150556176911248e-05\\
639.452232441	2.970215058667561e-05\\
};

\end{axis}

\end{tikzpicture}%
  \caption{Results {\color{black} in C++ language (standard laptop)} 
  for the simulation of 3D FitzHugh--Nagumo
  model~\eqref{eq:FitzHughNagumo_3d} with
  $n=64$ spatial discretization points per direction up to final time $T=5$.
  {\color{black}Top} plot: error decay, with reference slope lines of order two (dashed) and 
  three (solid). {\color{black}Bottom} plot: work-precision diagram.}
  \label{fig:fhn3dCPPmy}
\end{figure}
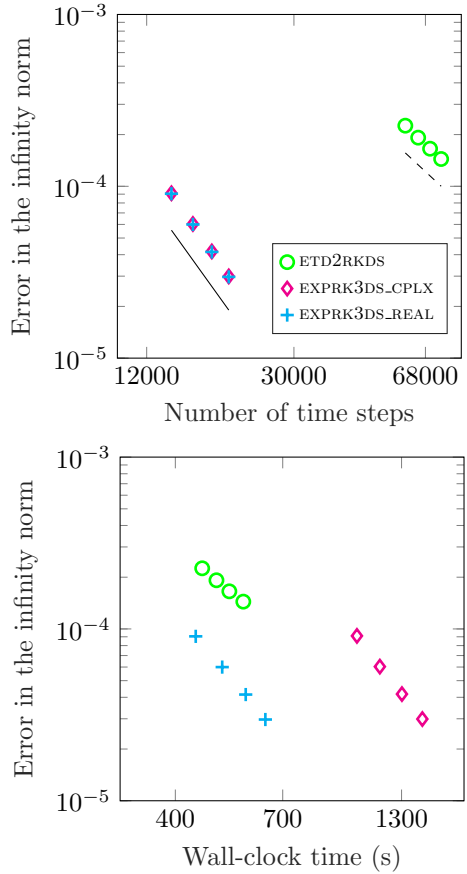

{\color{black} Then, we proceed by comparing the results of 
\textsc{exprk3ds\_real} and \textsc{exprk3ds\_cplx}} to
achieve the expected stationary pattern on different software architectures.
To this aim, we set the final time to $T=150$, we let vary the number of time steps,
while the semidiscretization in space is performed with $n=64$ points per direction.
The obtained pattern in the $u$ component is shown in Figure~\ref{fig:fhn3d_patt}, while the outcome
of the experiment is summarized in Table~\ref{tab:fhn3d_patt_laptop}.
\begin{figure}[htb!]
  \centering
  \input{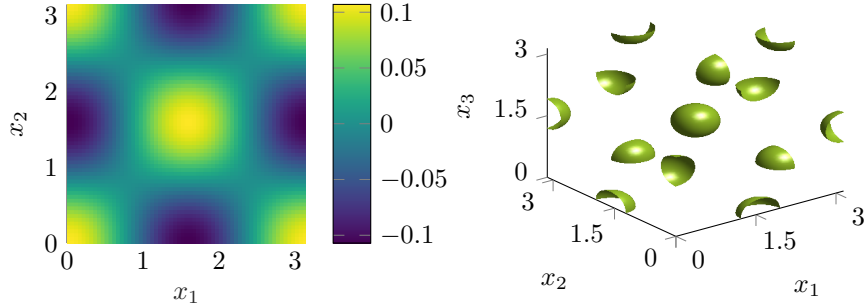}
  \caption{Turing pattern ($u$ component)
    for 3D FitzHugh--Nagumo model~\eqref{eq:FitzHughNagumo_3d}
    obtained at final time
    $T=150$ with $n=64$ spatial discretization points per direction.
    The reported slice (left plot) corresponds to $x_3=1.55$
    and the isosurface value (right plot) is $0.08$.}
  \label{fig:fhn3d_patt}
\end{figure}
\begin{table}[htb!]
  \caption{Wall-clock time (in seconds) for the simulation 
  of 3D FitzHugh--Nagumo
  model~\eqref{eq:FitzHughNagumo_3d} up to $T=150$ with $n=64$
  spatial discretization points per direction (i.e., $N=2\cdot64^3$ degrees of freedom).
  {\color{black}The time integrators are \textsc{exprk3ds\_real} (top) and
  \textsc{exprk3ds\_cplx} (bottom)},
  with varying number of time steps, using different software architectures
  {\color{black} (standard laptop)}.
  In brackets we report the portion of time needed for computing the
  $\varphi$-functions.}
  \label{tab:fhn3d_patt_laptop}
  \centering
  \begin{tabular}{c|c|c|c}
  Number of steps & MATLAB \textit{double} & {\color{black}C++} \textit{double} & {\color{black}CUDA} \textit{single} \\
  \hline
  10000 & 439.47 (0.09) & 340.36 (0.06) & 26.90 (0.05)\\
  15000 & 607.62 (0.11) & 509.48 (0.07) & 40.92 (0.07)\\
  20000 & 850.41 (0.12) & 677.91 (0.07) & 54.54 (0.07)\\
  \end{tabular}\\\vspace{5pt}
  \begin{tabular}{c|c|c|c}
  Number of steps & MATLAB \textit{double} & {\color{black}C++} \textit{double} & {\color{black}CUDA} \textit{single} \\
  \hline
  10000 & 918.86 (0.13) & 733.68 (0.07) & 44.99 (0.06)\\
  15000 & 1396.22 (0.14) & 1087.42 (0.07) & 65.87 (0.06)\\
  20000 & 1855.47 (0.14) & 1460.29 (0.07) & 90.54 (0.06)\\
  \end{tabular}
\end{table}
Again, as already observed {\color{black}for the 2D Schnakenberg model},
the wall-clock time is proportional to 
the number of time steps for {\color{black} both the integrators} and 
all the architectures under consideration. 
{\color{black}We recall that, in this three dimensional setting, we need to 
compute two \textit{complex} and three \textit{real} Tucker operators 
for each action of $\varphi$-function approximation
in \textsc{exprk3ds\_cplx} and \textsc{exprk3ds\_real}, respectively. 
Even if we need one less Tucker operator
for the former, obviously the computational cost of forming the approximation
is larger, since we employ
complex arithmetic. Overall, we then observe that for the example under
consideration it is more convenient to employ the \textsc{exprk3ds\_real} method.}
Also, notice that the employment of a GPU in this three-dimensional case is
even more effective compared to the two-dimensional scenario.

Finally, we present the performance results by doing simulations with professional
level hardware. To this purpose, 
{\color{black}we first test the 
directional split integrators on problem~\eqref{eq:FitzHughNagumo_3d} with 
the spatial domain
discretized by $n=100$ point per direction, i.e., the total number of degrees
of freedom is $N=2 \cdot 100^3$. The final time is set to $T=5$.
The number of time steps ranges from 60000 to 75000 for the second order integrators,
while from 14000 to 20000 for the third order ones.
We use double precision arithmetic, and the results are collected in Figure~\ref{fig:fhn3dser}.}
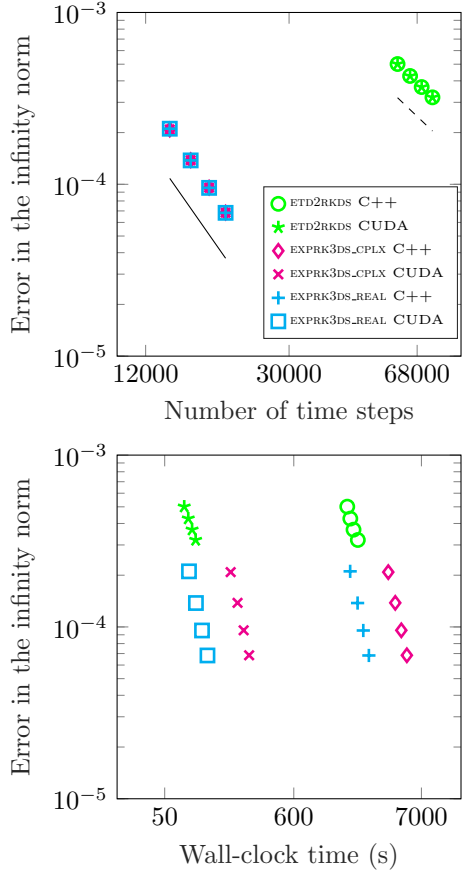
\begin{figure}[htb!]
  \centering
%
%
%
\begin{tikzpicture}

\begin{axis}[%
width=1.8in,
height=1.8in,
scale only axis,
xmode=log,
xmin=10000,
xmax=90000,
xminorticks=true,
xlabel style={font=\color{white!15!black}},
xlabel={Number of time steps},
xtick={12000,30000,68000},
xticklabels={12000,30000,68000},
ymode=log,
ymin=1e-05,
ymax=0.001,
yminorticks=true,
ylabel style={font=\color{white!15!black}},
ylabel={Error in the infinity norm},
axis background/.style={fill=white},
legend style={at={(0.425,0.05)}, font=\tiny, anchor=south west, legend cell align=left, align=left, draw=white!15!black}
]
\addplot [color=green, mark=o, only marks, line width=1pt, mark size = 2.5pt, mark options={solid, green}]
  table[row sep=crcr]{%
60000	0.0005011954702363115\\
65000	0.0004270221398027616\\
70000	0.0003681653987591666\\
75000	0.0003206808619908215\\
};
\addlegendentry{\textsc{etd2rkds} C++}

\addplot [color=green, mark=star, only marks, line width=1pt, mark size = 2.5pt, mark options={solid, green}]
  table[row sep=crcr]{%
60000	0.0005011954702309112\\
65000	0.0004270221397975863\\
70000	0.0003681653987570664\\
75000	0.0003206808620031974\\
};
\addlegendentry{\textsc{etd2rkds} CUDA}

\addplot [color=magenta, mark=diamond, only marks, line width=1pt, mark size = 2.5pt, mark options={solid, magenta}]
  table[row sep=crcr]{%
14000	0.0002084604843785213\\
16000	0.0001380809661048104\\
18000	9.557196352698899e-05\\
20000	6.839706050039597e-05\\
};
\addlegendentry{\textsc{exprk3ds\_cplx} C++}

\addplot [color=magenta, mark=x, only marks, line width=1pt, mark size = 2.5pt, mark options={solid, magenta}]
  table[row sep=crcr]{%
14000	0.000208460485222047\\
16000	0.0001380809665644268\\
18000	9.557196360002263e-05\\
20000	6.839706275996549e-05\\
};
\addlegendentry{\textsc{exprk3ds\_cplx} CUDA}

\addplot [color=cyan, mark=+, only marks, line width=1pt, mark size = 2.5pt, mark options={solid, cyan}]
  table[row sep=crcr]{%
14000	0.0002106528114686799\\
16000	0.0001377542000151196\\
18000	9.534089373529145e-05\\
20000	6.822783495134776e-05\\
};
\addlegendentry{\textsc{exprk3ds\_real} C++}

\addplot [color=cyan, mark=square, only marks, line width=1pt, mark size = 2.5pt, mark options={solid, cyan}]
  table[row sep=crcr]{%
14000	0.0002106528114689049\\
16000	0.0001377542000150446\\
18000	9.534089373206624e-05\\
20000	6.822783495089774e-05\\
};
\addlegendentry{\textsc{exprk3ds\_real} CUDA}

\addplot [color=black, dashed, forget plot]
  table[row sep=crcr]{%
60000	3.1911e-04\\
75000	2.0423e-04\\
};
\addplot [color=black, forget plot]
  table[row sep=crcr]{%
14000	1.0824e-04\\
20000	3.7128e-05\\
};
\end{axis}

\end{tikzpicture}
%
%
%
\begin{tikzpicture}

\begin{axis}[%
width=1.8in,
height=1.8in,
scale only axis,
xmode=log,
xmin=20,
xmax=15000,
xminorticks=true,
xlabel style={font=\color{white!15!black}},
xlabel={Wall-clock time (s)},
xtick={50,600,7000},
xticklabels={50,600,7000},
ymode=log,
ymin=1e-05,
ymax=0.001,
yminorticks=true,
ylabel style={font=\color{white!15!black}},
ylabel={Error in the infinity norm},
axis background/.style={fill=white},
]

\addplot [color=green, mark=o, only marks, line width=1pt, mark size = 2.5pt, mark options={solid, green}]
  table[row sep=crcr]{%
1679.959528522	0.0005011954702363115\\
1781.571759107	0.0004270221398027616\\
1895.297879643	0.0003681653987591666\\
2058.60651047	0.0003206808619908215\\
};

\addplot [color=green, mark=star, only marks, line width=1pt, mark size = 2.5pt, mark options={solid, green}]
  table[row sep=crcr]{%
72.810806059	0.0005011954702309112\\
78.692846651	0.0004270221397975863\\
84.71573211899999	0.0003681653987570664\\
90.73100566700001	0.0003206808620031974\\
};

\addplot [color=magenta, mark=diamond, only marks, line width=1pt, mark size = 2.5pt, mark options={solid, magenta}]
  table[row sep=crcr]{%
3699.225876083	0.0002084604843785213\\
4231.275737587	0.0001380809661048104\\
4754.598589253	9.557196352698899e-05\\
5289.477686565	6.839706050039597e-05\\
};

\addplot [color=magenta, mark=x, only marks, line width=1pt, mark size = 2.5pt, mark options={solid, magenta}]
  table[row sep=crcr]{%
178.015848093	0.000208460485222047\\
203.193776211	0.0001380809665644268\\
228.549878098	9.557196360002263e-05\\
253.937193928	6.839706275996549e-05\\
};

\addplot [color=cyan, mark=+, only marks, line width=1pt, mark size = 2.5pt, mark options={solid, cyan}]
  table[row sep=crcr]{%
1775.952824857	0.0002106528114686799\\
2049.277088726	0.0001377542000151196\\
2286.35639239	9.534089373529145e-05\\
2545.965994698	6.822783495134776e-05\\
};

\addplot [color=cyan, mark=square, only marks, line width=1pt, mark size = 2.5pt, mark options={solid, cyan}]
  table[row sep=crcr]{%
79.825662452	0.0002106528114689049\\
91.183593421	0.0001377542000150446\\
102.537909855	9.534089373206624e-05\\
113.903034797	6.822783495089774e-05\\
};

\end{axis}

\end{tikzpicture}%
  \caption{Results {\color{black}in C++ and CUDA languages (professional hardware)}
  for the simulation of 3D FitzHugh--Nagumo
  model~\eqref{eq:FitzHughNagumo_3d} with
  $n=100$ spatial discretization points per direction up to final time $T=5$.
  {\color{black}Top} plot: error decay, with reference slope lines of order two (dashed) and 
  three (solid). {\color{black}Bottom} plot: work-precision diagram.}
  \label{fig:fhn3dser}
\end{figure}
{\color{black}As highlighted also in the previous experiments, we observe 
a clear order of convergence for all the integrators and the architectures under
consideration. Moreover, from the work-precision diagram we conclude that also
in this case it is more convenient to employ the integrator with
real arithmetic than the one with complex coefficients, and overall the integrator
that performs best in reaching stringent accuracies is \textsc{exprk3ds\_real}
implemented in CUDA
(with an average speedup of 22 times passing from the CPU to the GPU).
To conclude, we perform a simulation 
considering the third order integrators \textsc{exprk3ds\_real} and
\textsc{exprk3ds\_cplx} with varying total number of degrees of freedom $N$.}
The final time is set to $T=150$ and the number of
time steps to $10000$.
The results are presented in Table~\ref{tab:fhn3d_patt_server}.
\begin{table}[htb!]
  \caption{Wall-clock time (in seconds) for the simulation
  of 3D
  FitzHugh--Nagumo
  model~\eqref{eq:FitzHughNagumo_3d} up to $T=150$ with increasing number of
  degrees of freedom $N${\color{black}, 10000 time steps,} and different software architectures
  {\color{black} (professional hardware)}. The time
  marching {\color{black}integrators are \textsc{exprk3ds\_real} (top)
  and \textsc{exprk3ds\_cplx} (bottom)}.
  In brackets we report the portion of time needed for computing the
  $\varphi$-functions.}
  \label{tab:fhn3d_patt_server}
  \centering
  \begin{tabular}{c|c|c|c}
  Number of d.o.f. $N$ & {\color{black}C++} \textit{double} & {\color{black}CUDA} \textit{double} & {\color{black}CUDA} \textit{single} \\
  \hline
  $2\cdot100^3$ & 1261.13 (0.29) & 57.31 (0.28) & 32.59 (0.22)\\
  $2\cdot150^3$ & 3686.32 (0.62) & 251.62 (0.40) & 119.60 (0.33)\\
  $2\cdot200^3$ & 9746.87 (1.12) & 685.89 (0.44) & 321.81 (0.35)\\
  \end{tabular}\\\vspace{5pt}
  \begin{tabular}{c|c|c|c}
  Number of d.o.f. $N$ & {\color{black}C++} \textit{double} & {\color{black}CUDA} \textit{double} & {\color{black}CUDA} \textit{single} \\
  \hline
  $2\cdot100^3$ & 2682.31 (0.37) & 127.47 (0.24) & 67.46 (0.22)\\
  $2\cdot150^3$ & 10732.53 (0.90) & 470.61 (0.33) & 241.18 (0.28)\\
  $2\cdot200^3$ & 24640.98 (2.41) & 1469.11 (0.45) & 753.38 (0.32)\\
  \end{tabular}
\end{table}
Also in this final experiment, as expected, the scaling is very good comparing 
CPU and GPU simulations. In particular, the resulting speedup from CPU to 
GPU in double precision is a factor among 14 and 23, hence larger than the 2D 
case. Performing the experiment in single precision arithmetic in GPU allows
to gain an additional factor of 2. {\color{black} The usage of
the scheme \textsc{exprk3ds\_cplx} results in larger 
computational cost also for this example}.

\section{Conclusions}\label{sec:conc}
In this manuscript, we introduced third order directional split approximations
for matrix $\varphi$-functions with underlying $d$-dimensional Kronecker sum 
structure. The derived formulas allow for the efficient construction
and employment of directional split exponential Runge--Kutta integrators of order 
three for the time integration of ODEs systems with Kronecker structure. 
The efficiency of the proposed tensor approach has been tested against 
state-of-the-art techniques on two {\color{black}important} physical models
in the context of Turing patterns for diffusion--reaction systems of
Partial Differential Equations, namely the 2D Schnakenberg and the 3D FitzHugh--Nagumo models.
{\color{black} In particular, it turns out that the investigated 
third order directional split integrator
\textsc{exprk3ds\_real} outperforms the \textsc{etd2rkds} scheme already available
in the literature}.
The numerical experiments also clearly show that the {\color{black}procedures} 
scale very well on modern computer hardware such as Graphic Processing Units.

\section*{Acknowledgments}
{\color{black}
Fabio Cassini received financial support from the Italian 
Ministry of University
and Research (MUR) with the PRIN Project 2022 No. 2022N9BM3N
``Efficient numerical schemes and optimal control methods for time-dependent 
partial differential equations''.
The author is member of the Gruppo Nazionale Calcolo Scientifico-Istituto Nazionale
di Alta Matematica (GNCS-INdAM).}
We would like to thank Marco Caliari for the fruitful discussions
and the valuable suggestions. Also, we are grateful to Lukas Einkemmer and 
the Research Area Scientific Computing of the University of Innsbruck for
the availability to use their professional hardware resources.

\appendix
\section{Pseudocodes}\label{sec:app}
{\color{black}
In the following we list the pseudocodes for the directional split
exponential integrators \textsc{exprk3ds\_real} and \textsc{exprk3ds\_cplx},
which employ the approximations presented in the manuscript.
}%
\begin{algorithm}[htb!]
  \caption{Pseudocode for \textsc{exprk3ds\_real} (if $d=2$) and 
  \textsc{exprk3ds\_cplx}.
  The relevant coefficients for the directional splitting approximations
  are given in Table~\ref{tab:phi1phi22d} (real coefficients) for \textsc{exprk3ds\_real}
  and in Table~\ref{tab:phi1phi2nd} for \textsc{exprk3ds\_cplx}.
  The notations $\alpha_{i,\mu}^{(\ell)}$ 
  and $\eta_{i,d}^{(\ell)}$ mean
  $\alpha_{i,\mu}$ and $\eta_{i,d}$ for the approximation of $\varphi_\ell$.
  The symbols $\mathcal{K}$ and $\mathcal{T}$ denote the action of the 
  Kronecker sum in tensor form~\eqref{eq:kronsumv} and the Tucker
  operator~\eqref{eq:krontomu}, 
  respectively, where the first input 
  is the tensor and the second one is the sequence of matrices.}
  \label{alg:exprk3dscplx}
  \KwIn{Initial datum tensor ($\bb U_0$), 
  ODEs nonlinear function ($\bb g$), 
  matrices which constitute $K$ ($A_1,\ldots, A_d$), final time ($T$), and 
  number of time steps ($m$).}
  \KwOut{Approximated solution $\bb U$ at final time $T$.}
  Compute $\tau=T/m$\;
  \textit{Needed $\varphi$-functions}\\
  \For{$\mu=1,\ldots,d$}{
    Set $A\{\mu\} = A_\mu$\;
    \For{$i=1,2$}{
      \textit{For stage $U_{n2}$}\\
      Compute $P_{i,2}^{(1)}\{\mu\}
      = \varphi_{\ell_i}\left(\frac{\tau}{3}\alpha_{i,\mu}^{(1)}A\{\mu\}\right)$\;
      \For{$\ell=1,2$}{
        \textit{For stage $U_{n3}$}\\
        Compute $P_{i,3}^{(\ell)}\{\mu\}
        = \varphi_{\ell_i}\left(\frac{2\tau}{3}\alpha_{i,\mu}^{(\ell)}A\{\mu\}\right)$\;
        \textit{For final approximation $U_{n+1}$}\\
        Compute $P_{i,f}^{(\ell)}\{\mu\}
        = \varphi_{\ell_i}\left(\tau\alpha_{i,\mu}^{(\ell)}A\{\mu\}\right)$\;
      }
    }
  }
  Set $t=0$\ and $\bb U = \bb U_0$\;
  \textit{Time integration}\\
  \For{$n=0,\ldots,m-1$}{
    Compute $\bb G = \bb g(t,\bb U)$ and $\bb F = \mathcal{K}(\bb U, A) + \bb G$\;
    \textit{Stage $U_{n2}$}\\
    Compute $\bb U_2 = \bb U + \frac{\tau}{3}
    \left(
    \eta_{1,d}^{(1)}\mathcal{T}(\bb F,P_{1,2}^{(1)})+
    \eta_{2,d}^{(1)}\mathcal{T}(\bb F,P_{2,2}^{(1)})
    \right)$\;
    \textit{Stage $U_{n3}$}\\
    Compute $\bb D_2 = \bb g(t+\frac{\tau}{3},\bb U_2) - \bb G$\;
    Compute $\bb U_3 = \bb U + \frac{2\tau}{3}
    \left(
    \eta_{1,d}^{(1)}\mathcal{T}(\bb F,P_{1,3}^{(1)})+
    \eta_{2,d}^{(1)}\mathcal{T}(\bb F,P_{2,3}^{(1)})
    \right)
    + \frac{4\tau}{3}
    \left(
    \eta_{1,d}^{(2)}\mathcal{T}(\bb D_2,P_{1,3}^{(2)})+
    \eta_{2,d}^{(2)}\mathcal{T}(\bb D_2,P_{2,3}^{(2)})
    \right)$\;
    \textit{Final approximation $U_{n+1}$}\\
    Compute $\bb D_3 = \bb g(t+\frac{2\tau}{3},\bb U_3) - \bb G$\;
    Compute $\bb U = \bb U + \tau
    \left(
    \eta_{1,d}^{(1)}\mathcal{T}(\bb F,P_{1,f}^{(1)})+
    \eta_{2,d}^{(1)}\mathcal{T}(\bb F,P_{2,f}^{(1)})
    \right)
    + \frac{3\tau}{2}
    \left(
    \eta_{1,d}^{(2)}\mathcal{T}(\bb D_3,P_{1,f}^{(2)})+
    \eta_{2,d}^{(2)}\mathcal{T}(\bb D_3,P_{2,f}^{(2)})
    \right)$\;
    Set $t = t + \tau$\;
  }
\end{algorithm}
\begin{algorithm}[htb!]
  \caption{Pseudocode for \textsc{exprk3ds\_real} (if $d>2$).
  The relevant coefficients for the directional splitting approximations
  are given in Table~\ref{tab:phi1phi2nd3}.
  The notations $\alpha_{i,\mu}^{(\ell)}$ 
  and $\eta_{i,d}^{(\ell)}$ mean
  $\alpha_{i,\mu}$ and $\eta_{i,d}$ for the approximation of $\varphi_\ell$.
  The symbols $\mathcal{K}$ and $\mathcal{T}$ denote the action of the 
  Kronecker sum in tensor form~\eqref{eq:kronsumv} and the
  Tucker operator~\eqref{eq:krontomu}, 
  respectively, where the first input 
  is the tensor and the second one is the sequence of matrices.}
  \label{alg:exprk3dsreal}
  \KwIn{Initial datum tensor ($\bb U_0$), 
  ODEs nonlinear function ($\bb g$), 
  matrices which constitute $K$ ($A_1,\ldots, A_d$), final time ($T$), and 
  number of time steps ($m$).}
  \KwOut{Approximated solution $\bb U$ at final time $T$.}
  Compute $\tau=T/m$\;
  \textit{Needed $\varphi$-functions}\\
  \For{$\mu=1,\ldots,d$}{
    Set $A\{\mu\} = A_\mu$\;
    \For{$i=1,2,3$}{
      \textit{For stage $U_{n2}$}\\
      Compute $P_{i,2}^{(1)}\{\mu\}
      = \varphi_{\ell_i}\left(\frac{\tau}{3}\alpha_{i,\mu}^{(1)}A\{\mu\}\right)$\;
      \For{$\ell=1,2$}{
        \textit{For stage $U_{n3}$}\\
        Compute $P_{i,3}^{(\ell)}\{\mu\}
        = \varphi_{\ell_i}\left(\frac{2\tau}{3}\alpha_{i,\mu}^{(\ell)}A\{\mu\}\right)$\;
        \textit{For final approximation $U_{n+1}$}\\
        Compute $P_{i,f}^{(\ell)}\{\mu\}
        = \varphi_{\ell_i}\left(\tau\alpha_{i,\mu}^{(\ell)}A\{\mu\}\right)$\;
      }
    }
  }
  Set $t=0$\ and $\bb U = \bb U_0$\;
  \textit{Time integration}\\
  \For{$n=0,\ldots,m-1$}{
    Compute $\bb G = \bb g(t,\bb U)$ and $\bb F = \mathcal{K}(\bb U, A) + \bb G$\;
    \textit{Stage $U_{n2}$}\\
    Compute $\bb U_2 = \bb U + \frac{\tau}{3}
    \left(
    \eta_{1,d}^{(1)}\mathcal{T}(\bb F,P_{1,2}^{(1)})+
    \eta_{2,d}^{(1)}\mathcal{T}(\bb F,P_{2,2}^{(1)})+
    \eta_{3,d}^{(1)}\mathcal{T}(\bb F,P_{3,2}^{(1)})
    \right)$\;
    \textit{Stage $U_{n3}$}\\
    Compute $\bb D_2 = \bb g(t+\frac{\tau}{3},\bb U_2) - \bb G$\;
    Compute $\bb U_3 = \bb U + \frac{2\tau}{3}
    \left(
    \eta_{1,d}^{(1)}\mathcal{T}(\bb F,P_{1,3}^{(1)})+
    \eta_{2,d}^{(1)}\mathcal{T}(\bb F,P_{2,3}^{(1)})+
    \eta_{3,d}^{(1)}\mathcal{T}(\bb F,P_{3,3}^{(1)})
    \right)
    + \frac{4\tau}{3}
    \left(
    \eta_{1,d}^{(2)}\mathcal{T}(\bb D_2,P_{1,3}^{(2)})+
    \eta_{2,d}^{(2)}\mathcal{T}(\bb D_2,P_{2,3}^{(2)})+
    \eta_{3,d}^{(2)}\mathcal{T}(\bb D_2,P_{3,3}^{(2)})
    \right)$\;
    \textit{Final approximation $U_{n+1}$}\\
    Compute $\bb D_3 = \bb g(t+\frac{2\tau}{3},\bb U_3) - \bb G$\;
    Compute $\bb U = \bb U + \tau
    \left(
    \eta_{1,d}^{(1)}\mathcal{T}(\bb F,P_{1,f}^{(1)})+
    \eta_{2,d}^{(1)}\mathcal{T}(\bb F,P_{2,f}^{(1)})+
    \eta_{3,d}^{(1)}\mathcal{T}(\bb F,P_{3,f}^{(1)})
    \right)
    + \frac{3\tau}{2}
    \left(
    \eta_{1,d}^{(2)}\mathcal{T}(\bb D_3,P_{1,f}^{(2)})+
    \eta_{2,d}^{(2)}\mathcal{T}(\bb D_3,P_{2,f}^{(2)})+
    \eta_{3,d}^{(2)}\mathcal{T}(\bb D_3,P_{3,f}^{(2)})
    \right)$\;
    Set $t = t + \tau$\;
  }
\end{algorithm}
\clearpage

\bibliographystyle{elsarticle-num}
\bibliography{main.bib}

\begin{thebibliography}{10}
\expandafter\ifx\csname url\endcsname\relax
  \def\url#1{\texttt{#1}}\fi
\expandafter\ifx\csname urlprefix\endcsname\relax\def\urlprefix{URL }\fi
\expandafter\ifx\csname href\endcsname\relax
  \def\href#1#2{#2} \def\path#1{#1}\fi

\bibitem{HO10}
M.~Hochbruck, A.~Ostermann, Exponential integrators, Acta Numer. 19 (2010)
  209--286.
\newblock \href {https://doi.org/10.1017/S0962492910000048}
  {\path{doi:10.1017/S0962492910000048}}.

\bibitem{CCEOZ22}
M.~Caliari, F.~Cassini, L.~Einkemmer, A.~Ostermann, F.~Zivcovich, A $\mu$-mode
  integrator for solving evolution equations in {K}ronecker form, J. Comput.
  Phys. 455 (2022) 110989.
\newblock \href {https://doi.org/10.1016/j.jcp.2022.110989}
  {\path{doi:10.1016/j.jcp.2022.110989}}.

\bibitem{CC24}
M.~Caliari, F.~Cassini, A second order directional split exponential integrator
  for systems of advection--diffusion--reaction equations, J. Comput. Phys. 498
  (2024) 112640.
\newblock \href {https://doi.org/10.1016/j.jcp.2023.112640}
  {\path{doi:10.1016/j.jcp.2023.112640}}.

\bibitem{MMPC22}
J.~Mu{\~n}oz-Matute, D.~Pardo, V.~M. Calo, Exploiting the {K}ronecker product
  structure of $\varphi$--functions in exponential integrators, Int. J. Numer.
  Methods Eng. 123~(9) (2022) 2142--2161.
\newblock \href {https://doi.org/10.1002/nme.6929}
  {\path{doi:10.1002/nme.6929}}.

\bibitem{CMM23}
M.~Croci, J.~Mu{\~n}oz-Matute, Exploiting {K}ronecker structure in exponential
  integrators: {F}ast approximation of the action of $\varphi$-functions of
  matrices via quadrature, J. Comput. Sci. 67 (2023) 101966.
\newblock \href {https://doi.org/10.1016/j.jocs.2023.101966}
  {\path{doi:10.1016/j.jocs.2023.101966}}.

\bibitem{CCZ23}
M.~Caliari, F.~Cassini, F.~Zivcovich, A $\mu$-mode {BLAS} approach for
  multidimensional tensor-structured problems, Numer. Algorithms 92~(4) (2023)
  2483--2508.
\newblock \href {https://doi.org/10.1007/s11075-022-01399-4}
  {\path{doi:10.1007/s11075-022-01399-4}}.

\bibitem{CCZ23phi}
M.~Caliari, F.~Cassini, F.~Zivcovich, A $\mu$-mode approach for exponential
  integrators: actions of $\varphi$-functions of {K}ronecker sums,
  arXiv:2210.07667 [math.NA] (2024).

\bibitem{CC23}
M.~Caliari, F.~Cassini, Direction splitting of $\varphi$-functions in
  exponential integrators for $d$-dimensional problems in {K}ronecker form, J.
  Approx. Softw.In press (2023).

\bibitem{AAKW20}
E.~O. Asante-Asamani, A.~Kleefeld, B.~A. Wade, A second-order exponential time
  differencing scheme for non-linear reaction-diffusion systems with
  dimensional splitting, J. Comput. Phys. 415 (2020) 109490.
\newblock \href {https://doi.org/10.1016/j.jcp.2020.109490}
  {\path{doi:10.1016/j.jcp.2020.109490}}.

\bibitem{DASS20}
M.~C. D'Autilia, I.~Sgura, V.~Simoncini, Matrix-oriented discretization methods
  for reaction--diffusion {PDE}s: {C}omparisons and applications, Comput. Math.
  with Appl. 79~(7) (2020) 2067--2085.
\newblock \href {https://doi.org/10.1016/j.camwa.2019.10.020}
  {\path{doi:10.1016/j.camwa.2019.10.020}}.

\bibitem{LZL21}
D.~Li, X.~Zhang, R.~Liu, Exponential integrators for large-scale stiff
  {R}iccati differential equations, J. Comput. Appl. Math. 389 (2021) 113360.
\newblock \href {https://doi.org/10.1016/j.cam.2020.113360}
  {\path{doi:10.1016/j.cam.2020.113360}}.

\bibitem{LZZ22}
D.~Li, Y.~Zhang, X.~Zhang, Computing the {L}yapunov operator
  $\varphi$-functions, with an application to matrix-valued exponential
  integrators, Appl. Numer. Math. 182 (2022) 330--343.
\newblock \href {https://doi.org/10.1016/j.apnum.2022.08.009}
  {\path{doi:10.1016/j.apnum.2022.08.009}}.

\bibitem{NWZL08}
Q.~Nie, F.~Y.~M. Wan, Y.-T. Zhang, X.-F. Liu, Compact integration factor
  methods in high spatial dimensions, J. Comput. Phys. 227 (2008) 5238--5255.
\newblock \href {https://doi.org/10.1016/j.jcp.2008.01.050}
  {\path{doi:10.1016/j.jcp.2008.01.050}}.

\bibitem{JZZD15}
L.~Ju, J.~Zhang, L.~Zhu, Q.~Du, Fast {E}xplicit {I}ntegration {F}actor
  {M}ethods for {S}emilinear {P}arabolic {E}quations, J. Sci. Comput. 62 (2015)
  431--455.
\newblock \href {https://doi.org/10.1007/s10915-014-9862-9}
  {\path{doi:10.1007/s10915-014-9862-9}}.

\bibitem{ZJZ16}
L.~Zhu, L.~Ju, W.~Zhao, Fast {H}igh-{O}rder {C}ompact {E}xponential {T}ime
  {D}ifferencing {R}unge--{K}utta {M}ethods for {S}econd-{O}rder {S}emilinear
  {P}arabolic {E}quations, J. Sci. Comput. 67 (2016) 1043--1065.
\newblock \href {https://doi.org/10.1007/s10915-015-0117-1}
  {\path{doi:10.1007/s10915-015-0117-1}}.

\bibitem{HJW19}
J.~Huang, L.~Ju, B.~Wu, A fast compact exponential time differencing method for
  semilinear parabolic equations with {N}eumann boundary conditions, Appl.
  Math. Lett. 94 (2019) 257--265.
\newblock \href {https://doi.org/10.1016/j.aml.2019.03.012}
  {\path{doi:10.1016/j.aml.2019.03.012}}.

\bibitem{AMH10}
A.~H. Al-Mohy, N.~J. Higham, A {N}ew {S}caling and {S}quaring {A}lgorithm for
  the {M}atrix {E}xponential, SIAM J. Matrix Anal. Appl. 31~(3) (2010)
  970--989.
\newblock \href {https://doi.org/10.1137/09074721X}
  {\path{doi:10.1137/09074721X}}.

\bibitem{SID19}
J.~Sastre, J.~Ib{\'a}{\~n}ez, E.~Defez, Boosting the computation of the matrix
  exponential, Appl. Math. Comput. 340 (2019) 206--220.
\newblock \href {https://doi.org/https://doi.org/10.1016/j.amc.2018.08.017}
  {\path{doi:https://doi.org/10.1016/j.amc.2018.08.017}}.

\bibitem{CZ19}
M.~Caliari, F.~Zivcovich, On-the-fly backward error estimate for matrix
  exponential approximation by {T}aylor algorithm, J. Comput. Appl. Math. 346
  (2019) 532--548.
\newblock \href {https://doi.org/10.1016/j.cam.2018.07.042}
  {\path{doi:10.1016/j.cam.2018.07.042}}.

\bibitem{AIDAJ23}
J.~M. Alonso, J.~Ib{\'a}{\~n}ez, E.~Defez, P.~Alonso-Jord\'a, Euler polynomials
  for the matrix exponential approximation, J. Comput. Appl. Math 425 (2023)
  115074.
\newblock \href {https://doi.org/10.1016/j.cam.2023.115074}
  {\path{doi:10.1016/j.cam.2023.115074}}.

\bibitem{SW09}
B.~Skaflestad, W.~M. Wright, The scaling and modified squaring method for
  matrix functions related to the exponential, Appl. Numer. Math. 59~(3--4)
  (2009) 783--799.
\newblock \href {https://doi.org/10.1016/j.apnum.2008.03.035}
  {\path{doi:10.1016/j.apnum.2008.03.035}}.

\bibitem{LYL22}
D.~Li, S.~Yang, J.~Lan, Efficient and accurate computation for the
  $\varphi$-functions arising from exponential integrators, Calcolo 59~(11)
  (2022) 1--24.
\newblock \href {https://doi.org/10.1007/s10092-021-00453-2}
  {\path{doi:10.1007/s10092-021-00453-2}}.

\bibitem{GRT18}
S.~Gaudreault, G.~Rainwater, M.~Tokman, {KIOPS: A} fast adaptive {K}rylov
  subspace solver for exponential integrators, J. Comput. Phys. 372 (2018)
  236--255.
\newblock \href {https://doi.org/10.1016/j.jcp.2018.06.026}
  {\path{doi:10.1016/j.jcp.2018.06.026}}.

\bibitem{LPR19}
V.~T. Luan, J.~A. Pudykiewicz, D.~R. Reynolds, Further development of efficient
  and accurate time integration schemes for meteorological models, J. Comput.
  Phys. 376 (2019) 817--837.
\newblock \href {https://doi.org/10.1016/j.jcp.2018.10.018}
  {\path{doi:10.1016/j.jcp.2018.10.018}}.

\bibitem{AMH11}
A.~H. Al-Mohy, N.~J. Higham, Computing the {A}ction of the {M}atrix
  {E}xponential, with an {A}pplication to {E}xponential {I}ntegrators, SIAM J.
  Sci. Comput. 33~(2) (2011) 488--511.
\newblock \href {https://doi.org/10.1137/100788860}
  {\path{doi:10.1137/100788860}}.

\bibitem{NW12}
J.~Niesen, W.~M. Wright, Algorithm 919: A {K}rylov {S}ubspace {A}lgorithm for
  {E}valuating the $\phi$-functions {A}ppearing in {E}xponential {I}ntegrators,
  ACM Trans. Math. Software 38~(3) (2012) 1--19.
\newblock \href {https://doi.org/10.1145/2168773.2168781}
  {\path{doi:10.1145/2168773.2168781}}.

\bibitem{CKOR16}
M.~Caliari, P.~Kandolf, A.~Ostermann, S.~Rainer, The {L}eja {M}ethod
  {R}evisited: {B}ackward {E}rror {A}nalysis for the {M}atrix {E}xponential,
  SIAM J. Sci. Comput. 38~(3) (2016) A1639--A1661.
\newblock \href {https://doi.org/10.1137/15M1027620}
  {\path{doi:10.1137/15M1027620}}.

\bibitem{CCZ20}
M.~Caliari, F.~Cassini, F.~Zivcovich, Approximation of the matrix exponential
  for matrices with a skinny field of values, BIT Numer. Math. 60~(4) (2020)
  1113--1131.
\newblock \href {https://doi.org/10.1007/s10543-020-00809-0}
  {\path{doi:10.1007/s10543-020-00809-0}}.

\bibitem{CCZ23b}
M.~Caliari, F.~Cassini, F.~Zivcovich, {BAMPHI}: {M}atrix-free and
  transpose-free action of linear combinations of $\varphi$-functions from
  exponential integrators, J. Comput. Appl. Math. 423 (2023) 114973.
\newblock \href {https://doi.org/10.1016/j.cam.2022.114973}
  {\path{doi:10.1016/j.cam.2022.114973}}.

\bibitem{BS17}
M.~Benzi, V.~Simoncini, Approximation of functions of large matrices with
  {K}ronecker structure, Numer. Math. 135 (2017) 1--26.
\newblock \href {https://doi.org/10.1007/s00211-016-0799-9}
  {\path{doi:10.1007/s00211-016-0799-9}}.

\bibitem{HO05}
M.~Hochbruck, A.~Ostermann, Explicit {E}xponential {R}unge--{K}utta {M}ethods
  for {S}emilinear {P}arabolic {P}roblems, SIAM J. Numer. Anal. 43~(3) (2005)
  1069--1090.
\newblock \href {https://doi.org/10.1137/040611434}
  {\path{doi:10.1137/040611434}}.

\bibitem{K06}
T.~Kolda, Multilinear operators for higher-order decompositions, Tech. Rep.
  SAND2006-2081, Sandia National Laboratories (April 2006).
\newblock \href {https://doi.org/10.2172/923081} {\path{doi:10.2172/923081}}.

\bibitem{KB09}
T.~G. Kolda, B.~W. Bader, Tensor decompositions and applications, SIAM Rev.
  51~(3) (2009) 455--500.
\newblock \href {https://doi.org/10.1137/07070111X}
  {\path{doi:10.1137/07070111X}}.

\bibitem{mkl}
{Intel Corporation}, {Intel Math Kernel Library},
  \url{https://software.intel.com/content/www/us/en/develop/tools/oneapi/components/onemkl.html}
  (2024).

\bibitem{XQY12}
Z.~Xianyi, W.~Qian, Z.~Yunquan, Model-driven {L}evel 3 {BLAS} {P}erformance
  {O}ptimization on {L}oongson 3{A} {P}rocessor, in: 2012 IEEE 18th
  international conference on parallel and distributed systems, 2012, pp.
  684--691.

\bibitem{cublas}
{NVIDIA Corporation}, {cuBLAS documentation},
  \url{https://docs.nvidia.com/cuda/cublas/index.html} (2024).

\bibitem{CLOS15}
D.~A. Cox, J.~Little, D.~O'Shea, Ideals, {V}arieties, and {A}lgorithms. {A}n
  {I}ntroduction to {C}omputational {A}lgebraic {G}eometry and {C}ommutative
  {A}lgebra, 4th Edition, Undergraduate Texts in Mathematics, Springer Cham,
  2015.
\newblock \href {https://doi.org/10.1007/978-3-319-16721-3}
  {\path{doi:10.1007/978-3-319-16721-3}}.

\bibitem{GLRS19}
G.~Gambino, M.~C. Lombardo, G.~Rubino, M.~Sammartino, Pattern selection in the
  2{D} {F}itz{H}ugh--{N}agumo model, Ric. di Mat. 68 (2019) 535--549.
\newblock \href {https://doi.org/10.1007/s11587-018-0424-6}
  {\path{doi:10.1007/s11587-018-0424-6}}.

\bibitem{AMS23}
A.~Alla, A.~Monti, I.~Sgura, Adaptive {POD}-{DEIM} correction for {T}uring
  pattern approximation in reaction--diffusion {PDE} systems, J. Numer. Math.
  31~(3) (2023) 205--229.
\newblock \href {https://doi.org/10.1515/jnma-2022-0025}
  {\path{doi:10.1515/jnma-2022-0025}}.

\bibitem{MPV08}
H.~Malchow, S.~V. Petrovskii, E.~Venturino, Spatiotemporal {P}atterns in
  {E}cology and {E}pidemiology: {T}heory, {M}odels, and {S}imulation, 1st
  Edition, CRC Mathematical Biology Series, Chapman \& Hall, 2008.

\end{thebibliography}
\end{document}